\newtheorem{theorem}{Theorem}[section]
\theoremstyle{definition}
\newtheorem{definition}{Definition}[section]
\newtheorem{exmp}{Example}[section]
\theoremstyle{remark}
\newtheorem{remark}{Remark}[section]
\DeclareMathOperator{\Mod}{Mod}
\DeclareMathOperator{\Adm}{Adm}
\DeclareMathOperator{\supp}{supp}
\newcommand{\pB}{\textbf{B}\xspace}
\newcommand{\pE}{\textbf{E}\xspace}
\author{Nathan Albin, Kapila Kottegoda and Pietro Poggi-Corradini\\
\footnotesize{Department of Mathematics, Kansas State University, Manhattan, KS}}
\title{Spanning tree modulus for secure broadcast games\footnote{This material is based upon work supported by the National Science Foundation under Grant No.~1515810.}}
\begin{document}
	\maketitle
	\begin{abstract}
	The theory of $p$-modulus provides a general framework for quantifying the richness of a family of objects on a graph.  When applied to the family of spanning trees, $p$-modulus has an interesting
	probabilistic interpretation.  In particular, the $2$-modulus problem in this case has been
	shown to be equivalent to the problem of finding a probability distribution on spanning trees
	that utilizes the edges of the graph as evenly as possible.  In the present work, we use this fact to produce a game-theoretic interpretation of modulus by employing modulus to solve a secure broadcast game.
	\end{abstract}

	\section{Introduction}\label{sec:introduction}

	\subsection{Secure broadcast games}\label{sec:game-intro}
	A communication network is often modeled as a collection of nodes and
	links.     For example, in a wired computer network, nodes can represent
	personal computers and links can represent transmission media such as
	Ethernet cables. Mathematically, such a network is often modeled by a
	graph $G = (V,E)$, where $V$ is the set of vertices corresponding to the
	nodes and $E$ is the set of edges corresponding to the links.

	A (one-to-all) broadcast in a network is a method of sending a message
	from a source node to all other nodes in the network by utilizing a subset
	of the network links.  If the goal is to use as few links as possible, then the
	set of links used will form a spanning tree of the graph $G$.

	The present paper concerns a simple secure broadcast game introduced and analyzed in~\cite{gueye2010design}.  Although many modified versions of this game exist (e.g.,~\cite{gueye2011network, schwartz2011network, gueye2012towards, laszka2012game, laszka2013quantifying, szeszler2017security}), the focus of this paper is on the connections between the original game and the theory of $p$-modulus.  The game is illustrated in Figure~\ref{fig:game}. In this game,
	Player \pB (the broadcaster) wants to broadcast a message to all other
	nodes.  This is accomplished by choosing a spanning tree to broadcast on.
	Player \pE (the eavesdropper) can observe the transmission along a single
	link in the link set $\{e_1,e_2,\dots,e_5\}$. Player \pB wins the game if
	the spanning tree avoids \pE's edge, while Player \pE wins if the tree
	includes the edge.  Posed in this way, this is a zero-sum game.

	\begin{figure}
	\centering
	\includegraphics[width=4in]{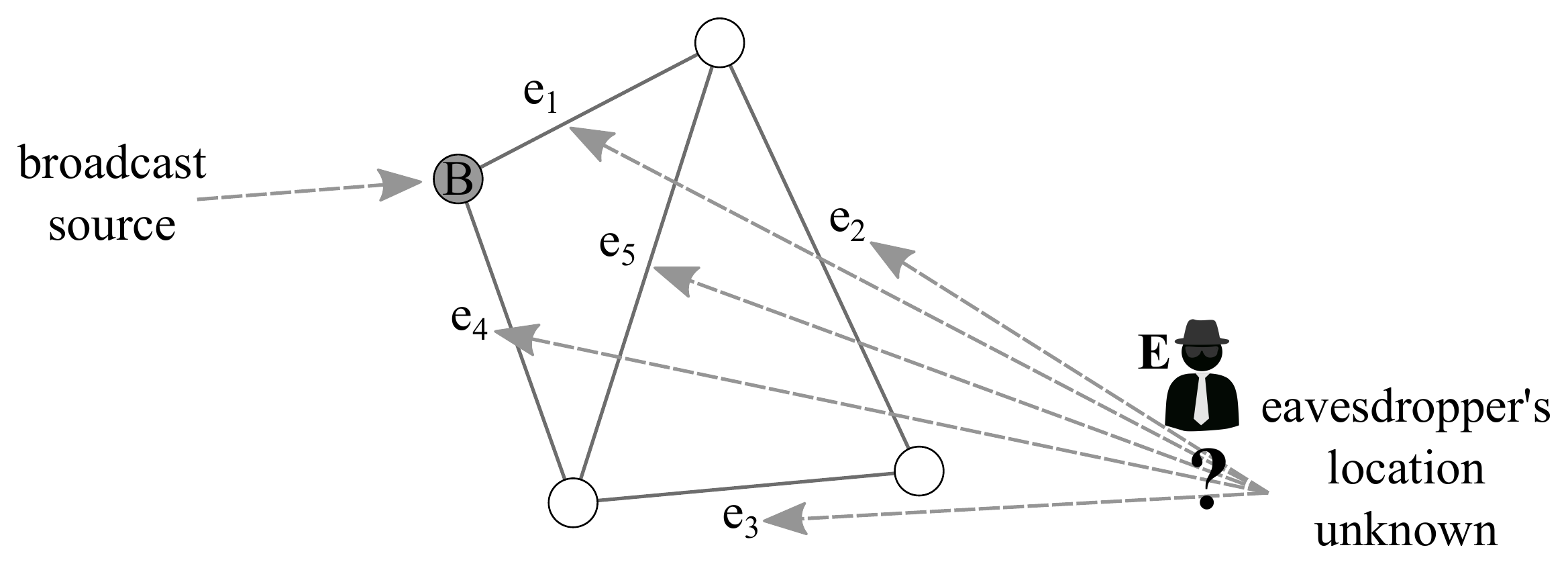}
	\caption{An illustration of the broadcast game.}\label{fig:game}
	\end{figure}

	Figure~\ref{fig:example-spanning-trees} shows four of the eight possible
	spanning trees that \pB can use to broadcast.  It is easy to see that no
	pure-strategy equilibrium exists.  Indeed, if \pB's spanning tree is known
	to \pE, then \pE can readily choose to listen on one of its edges.  Moreover,
	if \pE has chosen an edge, \pB can easily avoid it. On the other
	hand, Nash's existence theorem shows that a mixed-strategy equilibrium
	exists.  Consider the following mixed strategies for Player \pB.

	\begin{figure}
		\centering
		\includegraphics[width=4in]{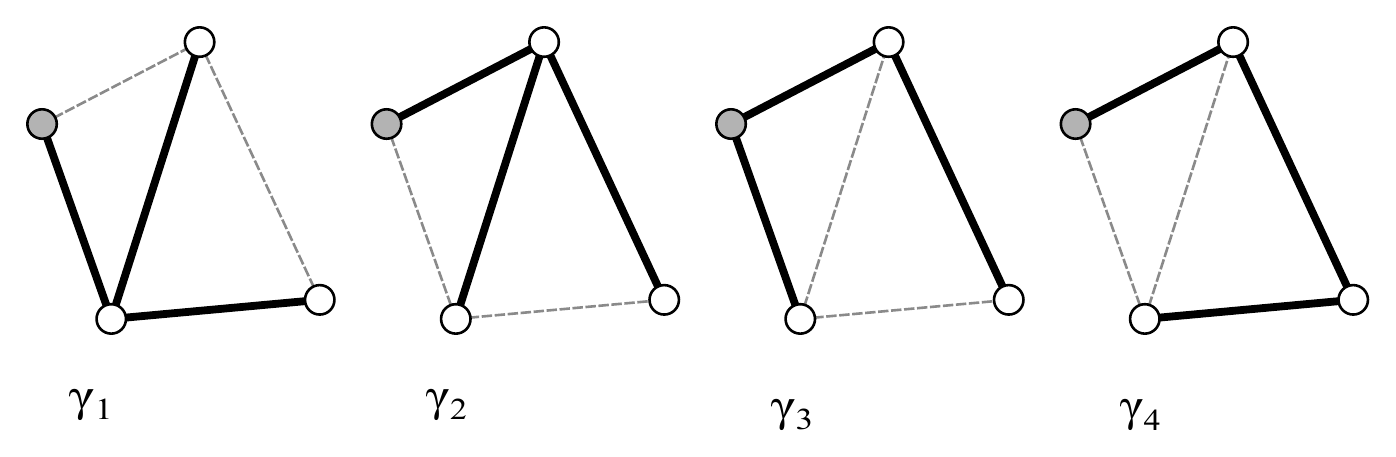}
		\caption{Four spanning trees of the example network in Figure~\ref{fig:game}.}
		\label{fig:example-spanning-trees}
	\end{figure}

	\begin{itemize}
        \item Strategy 1: Use $\gamma_1$ or $\gamma_2$ with equal probability. In this case, \pE can listen on $e_5$ and always win.
        \item Strategy 2: Use $\gamma_1, \gamma_2, \gamma_3, \gamma_4$ with equal probability.  In this case, \pE can choose any strategy that uses only $e_1$ and $e_2$ and win 75\% of the time.
		\item Strategy 3: Use $\gamma_1, \gamma_2, \gamma_3, \gamma_4$ with probabilities 2/5, 1/5, 1/5, 1/5 respectively. In this case, \pE wins 60\% of the time with any strategy.
	\end{itemize}
	In fact, as a consequence of Theorem~\ref{thm:one-shot-solution} (the main theorem of this paper) and Theorem~\ref{thm:homogeneous}, Strategy~3 corresponds to a Nash equilibrium.

	\subsection{Connection to modulus}

	The classical theory of conformal modulus was originally developed in complex analysis several decades ago, see Ahlfors’ comment on p.~81 of~\cite{ahlfors2010conformal}. Intuitively, $p$-modulus provides a method for quantifying the richness of a family of curves: modulus is large for families containing many short curves.  The $p$ in $p$-modulus refers to a parameter $p\in[1,\infty]$.  Modulus tends to favor the ``many curves'' aspect when $p$ is close to 1 and the ``short curves'' aspect as $p$ becomes large. 

	There are several natural analogs to $p$-modulus for families of paths on graphs~\cite{duffin1962extremal,schramm1993square, haissinsky2009empilements,albin2015modulus} and to more general families of objects including spanning trees.  As in the continuum case, modulus provides a quantitative way of describing the richness of a family.  Intuitively, modulus will be large for families containing many objects with small pairwise intersections~\cite{albin2016minimal,albin2018fairest}. This can be made more precise through the minimum expected overlap formulation of modulus (see~\eqref{eq:MEO-opt}), where the goal is to choose random spanning trees that share as few edges as possible on average.  This goal is remarkably similar to that of the broadcaster in the game, which motivates the present work.
	The main contribution of this paper is to establish rigorous connections between the solution of the modulus
	problem on spanning trees and solutions to the broadcast game.
	
	This paper is organized as follows. Section~\ref{sec:modulus} reviews the
	basic definitions and theorems of $p$-modulus along with its probabilistic
	interpretation. Section~\ref{sec:broadcast-games} reviews the broadcast game
	and establishes necessary and sufficient conditions for Nash equilibria.
	Section~\ref{sec:mod-to-solve} establishes the
	connection between modulus and the broadcast game.  Finally, Section~\ref{sec:algorithms-and-examples}
	describes the basic modulus algorithm and demonstrates its use in solving the game.
	
	\subsection{Primary contributions}
	
	The primary contributions of the present paper are the following.
	\begin{itemize}
	    \item Theorem~\ref{thm:necessary-and-sufficient} provides a set of necessary and sufficient conditions for a mixed-strategy solution to the secure broadcast game.
	    \item Theorem~\ref{thm:one-shot-solution} establishes an explicit connection between the $2$-modulus problem (formulated as a \emph{minimum expected overlap} problem) on the spanning trees of a graph and the solution to the game.
	    \item Theorem~\ref{thm:optimal-partition} along with the discussion in Section~\ref{sec:1-vs-2} provides a comparison between the solution method presented in this paper and other methods for solving the game, such as those found in~\cite{gueye2010design}.
	\end{itemize}
	
	\section{Modulus on networks}\label{sec:modulus}
	Let $G = (V,E, \sigma)$ be a finite graph with vertex set $V$, edge set $E$ and edge-weights $\sigma \in \mathbb{R}^{E}_{> 0}$. The graph may be directed or undirected and need not be simple.

	\subsection{Families of objects}

	A \emph{family of objects} on $G$ is a pair $(\Gamma,\mathcal{N})$, where $\Gamma$ is a countable (possibly infinite) index set and $\mathcal{N}\in\mathbb{R}^{\Gamma\times E}_{\ge 0}$ is a non-negative, possibly infinite matrix.  Often, $\Gamma$ alone is referred to as the family and $\mathcal{N}$ is called the \emph{usage matrix} for $\Gamma$.

	Families of objects used in practice often have a simple interpretation on the graph.  For example, it is common to choose $\Gamma\subset 2^E$ and to define the rows of $\mathcal{N}$ as indicator functions.

	\begin{exmp} \label{exmp:characteristic}
	If $\Gamma$ is the set of spanning trees of $G$ (thought of as subsets of $E$), we can define
    \begin{equation*}
      \mathcal{N}(\gamma,e) := \mathbbm{1}_\gamma(e)
   	\end{equation*}
    for $\gamma\in\Gamma$ and $e\in E$.
	\end{exmp}

	This is the family we shall use in the present paper, however the theory of modulus applies to more general families of objects.

	\begin{exmp}
	If $a$ and $b$ are distinct vertices and if $\Gamma$ is the family of walks in $G$ beginning at $a$ and ending at $b$, there are many different options for defining $\mathcal{N}$.  For example, we could choose any of the following definitions.
	\begin{itemize}
		\item $\mathcal{N}(\gamma,e) = \mathbbm{1}_\gamma(e)$,
		\item $\mathcal{N}(\gamma,e) = $ the number of times $\gamma$ crosses edge $e$,
		\item $\mathcal{N}(\gamma,e) = \ell(\gamma)^{-1}\mathbbm{1}_\gamma(e)$ where $\ell(\gamma)$ is the length of $\gamma$.
	\end{itemize}
	\end{exmp}

	Since our goal in this paper is primarily concerned with the modulus of spanning trees, we will assume in the following discussion that the family $\Gamma$ is finite.  We further assume that $\Gamma$ is \emph{nontrivial} in the sense that $\Gamma\ne\varnothing$ and each object $\gamma\in\Gamma$ has positive usage on at least one edge. (So $\mathcal{N}$ has at least one row, and no row of $\mathcal{N}$ is identically zero.)  While $\mathcal{N}$ can be thought of as a function $\mathcal{N}:\Gamma\times E\to\mathbb{R}_{\ge 0}$, it is often notationally convenient to represent $\mathcal{N}$ as a matrix with rows indexed by $\Gamma$ and columns indexed by $E$. So the $(\gamma,e)$ entry keeps a record of the usage of the edge $e$ by the object $\gamma$.

	\subsection{Densities and admissible densities}

	A \emph{density} on $G$ is a vector $\rho\in \mathbb{R}^E_{\geq 0}$. Choosing a density can be thought of as assigning a cost $\rho(e)$ for the use of each edge $e$. Each density, $\rho$, induces a \emph{$\rho$-length} or \emph{total usage cost} on the family of objects $\Gamma$.  The $\rho$-length of $\gamma\in\Gamma$ is defined as
	\begin{equation}\label{eq:rho-length}
		\ell_{\rho}(\gamma) = \sum_{e \in E} \mathcal{N}(\gamma,e)\rho(e) = (\mathcal{N}\rho)(\gamma).
	\end{equation}

	The modulus is defined through the minimization of an energy (defined shortly) over a set of admissible densities. We say that a density $\rho$ is \emph{admissible for $\Gamma$} if,
	\begin{equation*}
		\ell_{\rho}(\gamma) \geq 1 \hspace*{3mm} \forall \gamma \in \Gamma,\qquad
		\text{or, equivalently,}\qquad
		\mathcal{N}\rho \geq \textbf{1},
	\end{equation*}
	where $\textbf{1}$ is the vector of all ones in $\mathbb{R}^{\Gamma}$ and the inequality is understood elementwise. Using the latter notation, we define the set of all admissible densities as
	\begin{equation*}
		\Adm \Gamma = \{\rho \in \mathbb{R}^{E}_{\geq 0}: \mathcal{N}\rho \geq \textbf{1} \}.
	\end{equation*}
	An alternative notation that is sometimes useful is to define
	\begin{equation}\label{eq:rho-length-Gamma}
	\ell_\rho(\Gamma) := \min_{\gamma\in\Gamma}\ell_\rho(\gamma).
	\end{equation}
	So another way of describing the admissible set is,
	\begin{equation*}
	\Adm \Gamma = \{\rho\in\mathbb{R}^E_{\ge 0} : \ell_\rho(\Gamma)\ge 1\}.
	\end{equation*}

	\subsection{Energy and modulus}

	Given an exponent $p\geq1$ we define the \emph{$p$-energy} of a density $\rho$ as
	\begin{equation*}
	\mathcal{E}_{p,\sigma}(\rho) = \sum\limits_{e \in E} \sigma(e) \rho(e)^p.
	\end{equation*}
	For $p = \infty$, we also define $\infty$-energy as
	\begin{equation*}
	\mathcal{E}_{\infty,\sigma}(\rho) = \lim_{p \rightarrow \infty}(\mathcal{E}_{p,\sigma^p}(\rho))^{\frac{1}{p}} = \max\limits_{e \in E} \sigma(e)\rho(e).
	\end{equation*}
	\begin{definition}
		Given a graph $G = (V, E, \sigma)$, a family of objects $\Gamma$ with usage matrix $\mathcal{N} \in \mathbb{R}^{\Gamma\times E}$,
		and an exponent $1 \leq p \leq \infty$, the $p$-modulus of $\Gamma$ is,
		\begin{equation*}
			\Mod_{p,\sigma}(\Gamma) = \inf\limits_{\rho \in \Adm \Gamma} \mathcal{E}_{p,\sigma}(\rho).
		\end{equation*}
	\end{definition}
	
	In standard optimization notation, $p$-modulus is the value of the problem
	\begin{equation} \label{eq:cvxp}
	\begin{split}
	\text{minimize} &\hspace{8mm} \mathcal{E}_{p,\sigma}(\rho) \\
	\text{subject to} &\hspace{8mm} \mathcal{N}\rho \geq \textbf{1} \hspace{3mm} \text{and} \hspace{3mm} \rho \ge \textbf{0}
	\end{split}
	\end{equation}
	where each object $\gamma \in \Gamma$ determines an inequality constraint.  Written in the form of~\eqref{eq:cvxp}, it is evident that the modulus problem is an ordinary convex optimization problem; geometrically, it can be described as the problem of finding the distance (in a weighted $p$-norm) between the closed set $\Adm\Gamma$ and the origin. Thus, a minimizer always exists, and uniqueness holds when $ 1 < p < \infty$ due to the strict convexity of the objective function (see \cite{albin2016minimal}).

	Since this paper is focused entirely on the unweighted ($\sigma\equiv 1$), and $p=2$ case, we will typically drop the $\sigma$ from the subscripts above and instead write $\mathcal{E}_2$ and $\Mod_2$ for the energy and modulus respectively. As described in~\cite{albin2018fairest}, this assumption is not very restrictive since the case of weighted graphs can always be approximated by unweighted multigraphs.

	\subsection{Probabilistic interpretation}\label{sec:probabilistic-interpretation}

	The relationship between modulus and secure broadcast games arises from the probabilistic interpretation of modulus developed in~\cite{albin2016minimal}. In this interpretation, we consider an object $\underline{\gamma}$ chosen at random according to a probability mass function (pmf) $\mu$.  (The underline in the notation $\underline{\gamma}$ is used to distinguish the random object from its possible values.)  In other words, for each $\gamma\in\Gamma$, $\mu(\gamma)$ defines the probability that $\underline{\gamma}=\gamma$ or, in simpler notation, $\mu(\gamma)=\mathbb{P}_\mu(\underline{\gamma}=\gamma)$.  Here we use the subscript notation $\mathbb{P}_\mu$ to specify exactly which pmf is being used.  We will also represent the relationship between the random object $\underline{\gamma}$ and its pmf $\mu$ by the notation $\underline{\gamma}\sim\mu$.  The set of all pmfs on $\Gamma$ will be represented with the notation $\mathcal{P}(\Gamma) := \big\{\mu \in \mathbb{R}^{\Gamma}_{\geq 0}: \mu^T\textbf{1} = 1 \big\}$.

	If $e\in E$, $\mu\in\mathcal{P}(\Gamma)$ and $\underline{\gamma}\sim\mu$, then $\mathcal{N}(\underline{\gamma},e)$ is a real-valued random variable and we denote its expected value with respect to $\mu$ as $\mathbb{E}_{\mu}[\mathcal{N}(\underline{\gamma},e)]$. That is,
	\begin{equation}\label{eq:expected-usage}
	\mathbb{E}_{\mu}[\mathcal{N}(\underline{\gamma},e)] = \sum\limits_{\gamma \in \Gamma}\mathcal{N}(\gamma, e)\mu(\gamma) = (\mathcal{N}^T\mu)(e).
	\end{equation}
	Thus, $(\mathcal{N}^T\mu)(e)$ represents the \emph{expected usage} of edge $e$ by the random object $\underline{\gamma}\sim\mu$.  We will frequently use the variable $\eta$ to represent the vector $\mathcal{N}^T\mu\in\mathbb{R}^E$. 

	With this notation, we can now summarize the relevant result from~\cite{albin2016minimal}.  (This theorem is also a specialization of \cite[Theorem~2.8]{albin2018blocking} to the $p=2$ case.)
	\begin{theorem}
		Let $G = (V, E, \sigma)$ be a graph and let $\Gamma$ be a nontrivial finite family of objects on $G$ with usage matrix $\mathcal{N}$. Then we have,
		\begin{equation}
		\Mod_{2,\sigma}(\Gamma)^{-1} = \min\limits_{\mu \in \mathcal{P}(\Gamma)} \sum\limits_{e\in E}\sigma(e)^{-1}\mathbb{E}_{\mu}[\mathcal{N}(\underline{\gamma},e)]^2. \label{mod}
		\end{equation}
		The minimum is always attained and $\mu\in\mathcal{P}(\Gamma)$ is optimal if and only if
		\begin{equation}\label{eq:expected-usage-vs-rho}
		\mathbb{E}_{\mu}[\mathcal{N}(\underline{\gamma},e)] = \frac{\sigma(e)\rho^*(e)}{\Mod_{2,\sigma}(\Gamma)},
		\end{equation}
		where $\rho^*$ is the unique extremal density for $\Mod_{2,\sigma}(\Gamma)$.
	\end{theorem}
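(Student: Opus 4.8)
The plan is to prove both assertions at once by Lagrangian duality, exploiting that the modulus problem~\eqref{eq:cvxp} is a convex quadratic program whose dual admits an explicit closed form. Write $M := \Mod_{2,\sigma}(\Gamma)$ for brevity. First I would attach a multiplier $\lambda_\gamma\ge 0$ to each admissibility constraint $\ell_\rho(\gamma)\ge 1$ and form the Lagrangian $L(\rho,\lambda)=\sum_{e}\sigma(e)\rho(e)^2+\mathbf{1}^T\lambda-\lambda^T\mathcal{N}\rho$. Because the objective decouples across edges and is quadratic, the inner minimization $\min_{\rho\ge 0}L(\rho,\lambda)$ can be carried out edge-by-edge: since $\mathcal{N}\ge 0$ and $\lambda\ge 0$ force $(\mathcal{N}^T\lambda)(e)\ge 0$, the unconstrained minimizer $\rho(e)=(\mathcal{N}^T\lambda)(e)/(2\sigma(e))$ is automatically nonnegative, which yields the dual function $g(\lambda)=\mathbf{1}^T\lambda-\tfrac14\sum_{e}\sigma(e)^{-1}(\mathcal{N}^T\lambda)(e)^2$.

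Next I would establish strong duality. Since the constraints $\mathcal{N}\rho\ge\mathbf{1}$ are affine and the primal is feasible (nontriviality of $\Gamma$ guarantees $\sum_{e}\mathcal{N}(\gamma,e)>0$ for every $\gamma$, so a sufficiently large constant density is admissible), strong duality for convex programs with affine constraints gives $M=\max_{\lambda\ge 0}g(\lambda)$. To recover the probabilistic formulation I would reparametrize $\lambda=t\mu$ with $t=\mathbf{1}^T\lambda\ge 0$ and $\mu\in\mathcal{P}(\Gamma)$. Writing $\mathcal{E}(\mu):=\sum_{e}\sigma(e)^{-1}\mathbb{E}_{\mu}[\mathcal{N}(\underline{\gamma},e)]^2$ and using $\mathcal{N}^T\lambda=t\,\mathcal{N}^T\mu$, this turns $g$ into $t-\tfrac{t^2}{4}\mathcal{E}(\mu)$; maximizing this scalar quadratic over $t\ge 0$ gives the optimal $t^*=2/\mathcal{E}(\mu)$ and value $1/\mathcal{E}(\mu)$. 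Hence $M=\max_{\mu}1/\mathcal{E}(\mu)$, which is exactly~\eqref{mod} after inverting. Attainment of the minimum in~\eqref{mod} follows from continuity of $\mathcal{E}$ on the compact simplex $\mathcal{P}(\Gamma)$, which is compact because $\Gamma$ is finite.

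Finally I would establish the optimality characterization~\eqref{eq:expected-usage-vs-rho}. For necessity, the KKT stationarity condition at the primal optimum reads $2\sigma(e)\rho^*(e)=(\mathcal{N}^T\lambda^*)(e)$, while the reparametrization forces the optimal total mass to be $t^*=2/\mathcal{E}(\mu^*)=2M$, so that $\lambda^*=2M\mu^*$; substituting gives $\mathbb{E}_{\mu^*}[\mathcal{N}(\underline{\gamma},e)]=(\mathcal{N}^T\mu^*)(e)=\sigma(e)\rho^*(e)/M$, which is~\eqref{eq:expected-usage-vs-rho}. For sufficiency I would substitute~\eqref{eq:expected-usage-vs-rho} directly into $\mathcal{E}(\mu)$ and compute $\mathcal{E}(\mu)=M^{-2}\sum_{e}\sigma(e)\rho^*(e)^2=M^{-2}\mathcal{E}_{2,\sigma}(\rho^*)=M^{-1}$, using that $\mathcal{E}_{2,\sigma}(\rho^*)=M$ for the extremal density; since this is the minimal value of $\mathcal{E}$, any such $\mu$ is optimal.

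The main obstacle I anticipate is the careful justification of strong duality and of the KKT stationarity condition: in particular, confirming a constraint qualification holds (here supplied for free by affineness of the constraints together with primal feasibility) and disposing of the degenerate case $\lambda=0$, which is excluded because it would give dual value $g(0)=0$ while $M>0$ (nontriviality forces every admissible density to be nonzero, hence of positive energy). Everything else reduces to the explicit quadratic computations above.
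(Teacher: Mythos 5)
Your proof is correct. Note that the paper itself gives no proof of this theorem---it is imported from~\cite{albin2016minimal} (and is a special case of Theorem~2.8 of~\cite{albin2018blocking})---but your Lagrangian-duality argument, including the explicit dual function $g(\lambda)=\mathbf{1}^T\lambda-\tfrac14\sum_e\sigma(e)^{-1}(\mathcal{N}^T\lambda)(e)^2$ and the key splitting $\lambda=t\mu$ with $t\ge 0$, $\mu\in\mathcal{P}(\Gamma)$, is essentially the approach of that reference; indeed the same splitting $\lambda=\nu\mu$ is echoed in Section~\ref{sec:1-vs-2} of this paper when relating the $1$-modulus dual to the game.
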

	
	 This probabilistic interpretation has a simple meaning under the assumptions that $G$ is unweighted (i.e., $\sigma \equiv 1$), $\Gamma$ is a collection of subsets of $E$, and $\mathcal{N}$ is a matrix of 0's and 1's (i.e., $\mathcal{N}(\gamma,e)$ is defined through indicator functions as in Example~\ref{exmp:characteristic}). Since $\mathcal{N}$ is a $(0,1)$-matrix, $\mathcal{N}(\underline{\gamma},e$) is an indicator random variable for the event $e\in\underline{\gamma}$.  In this case, the expected usage 
	\begin{equation}\label{eq:eta-as-expectation}
	\eta(e):=\mathbb{E}_{\mu}[\mathcal{N}(\underline{\gamma},e)]
	= (\mathcal{N}^T\mu)(e)
	\end{equation}
	in equations~\eqref{eq:expected-usage} and~\eqref{mod} is actually a probability of inclusion
	\begin{equation}\label{eq:eta-as-prob}
	\eta(e) = \mathbb{P}_{\mu}(e\in\underline{\gamma}).
	\end{equation}
	Moreover, the summation in the right-hand side of~\eqref{mod} can be rewritten as
	\begin{equation}\label{eq:eta-as-EO}
	\begin{split}
	\sum_{e\in E}\eta(e)^2 &=
	\sum_{e\in E}\left(\sum_{\gamma\in\Gamma}
	\mathcal{N}(\gamma,e)\mu(\gamma)\right)
	\left(\sum_{\gamma'\in\Gamma}\mathcal{N}(\gamma',e)\mu(\gamma')\right)\\
	&= \sum_{\gamma\in\Gamma}\sum_{\gamma'\in\Gamma}
	\left(\sum_{e\in E}\mathcal{N}(\gamma,e)\mathcal{N}(\gamma',e)\right)
	\mu(\gamma)\mu(\gamma')\\
	&= \sum_{\gamma\in\Gamma}\sum_{\gamma'\in\Gamma}
	|\gamma\cap\gamma'|
	\mu(\gamma)\mu(\gamma') =: 
	\mathbb{E}_{\mu}|\underline{\gamma}\cap\underline{\gamma'}|.
	\end{split}
	\end{equation}
	Here $\underline{\gamma},\underline{\gamma'}\sim\mu$ are two independent $\Gamma$-valued random variables. The quantity $|\underline{\gamma}\cap\underline{\gamma'}|$ is the intersection (overlap) of these two random sets and is, therefore, an integer-valued random variable with expectation $\mathbb{E}_{\mu}|\underline{\gamma}\cap\underline{\gamma'}|$.  This latter quantity is called the \emph{expected overlap} of $\underline{\gamma}$ and $\underline{\gamma'}$.

	Using~\eqref{eq:eta-as-EO}, the probabilistic interpretation~\eqref{mod} can be expressed as follows. 
	\begin{equation}\label{eq:MEO}
	\Mod_{2}(\Gamma)^{-1} = \min\limits_{\mu \in \mathcal{P}(\Gamma)} \mathbb{E}_{\mu}|\underline{\gamma}\cap\underline{\gamma'}|.
	\end{equation}

	 Thus, computing $2$-modulus in this case is equivalent to finding a pmf that minimizes the expected overlap of two iid $\Gamma$-valued random variables. The right-hand side of \eqref{eq:MEO} is called the \emph{minimum expected overlap} (MEO) problem:
	\begin{equation}\label{eq:MEO-opt}
	\begin{split}
	\text{minimize} &\hspace{8mm} \mathbb{E}_{\mu}|\underline{\gamma}\cap\underline{\gamma'}| \\
	\text{subject to} &\hspace{8mm} \mu \in \mathcal{P}(\Gamma).
	\end{split}
	\end{equation}

	When $\mu^*$ is optimal for the MEO problem~\eqref{eq:MEO-opt}, we define $\eta^*=\mathcal{N}^T\mu^*$. By~\eqref{eq:expected-usage-vs-rho},
	\begin{equation}
	\eta^*(e) = \mathbb{P}_{\mu^*}(e\in\underline{\gamma}) = \mathbb{E}_{\mu^*}[\mathcal{N}(\underline{\gamma},e)] 
	= \frac{\rho^*(e)}{\Mod_{2}(\Gamma)} \label{eta*}.
	\end{equation}
	Since $\rho^*$ is unique, so is $\eta^*$. In general, however, the pmf $\mu^*$ is non-unique.  

	\begin{exmp}
	Consider the paw graph shown in Figure~\ref{fig:paw-trees}, with edges enumerated as indicated. This graph has 3 spanning trees: $\Gamma = \{\gamma_1, \gamma_2, \gamma_3\}$.	The pairwise overlaps among these trees satisfy
	\begin{equation*}
	|\gamma_i\cap\gamma_j| =
	\begin{cases}
	3 & \text{if  } i = j,\\
	2 & \text{otherwise}.
	\end{cases}
	\end{equation*}
	Let the density $\rho$ take the value 3/7 on edge 1 and 2/7 on other edges. This density is admissible.  Thus, its energy $\rho^T\rho = 3/7$ forms an upper bound for $\Mod_{2}(\Gamma)$. 

	On the other hand, let $\mu\in\mathcal{P}(\Gamma)$ be the uniform distribution. Then the expected overlap is,
	\begin{equation*}
	\mathbb{E}_{\mu}|\underline{\gamma}\cap\underline{\gamma'}| = 
	3\mathbb{P}_\mu(\underline{\gamma}=\underline{\gamma'})
	+ 2\mathbb{P}_\mu(\underline{\gamma}\ne\underline{\gamma'})
	=
	3\cdot\frac{1}{3} + 2\cdot\frac{2}{3} = \frac{7}{3}, 
	\end{equation*}
	which forms an upper bound for $\Mod_{2}(\Gamma)^{-1}$ by~\eqref{eq:MEO}. That is,
	\begin{equation*}
	\frac{3}{7} = \frac{1}{\mathbb{E}_{\mu}|\underline{\gamma}\cap\underline{\gamma'}|} \leq \Mod_{2}(\Gamma) \leq \rho^T\rho = \frac{3}{7}.		
	\end{equation*}
	Therefore,
	\begin{equation*}
	\Mod_{2}(\Gamma) = 3/7,
	\end{equation*}
	and $\rho$ and $\mu$ are optimal for their corresponding minimization problems.
	\end{exmp}

	\begin{figure}
	\begin{center}
		\includegraphics[width=6in]{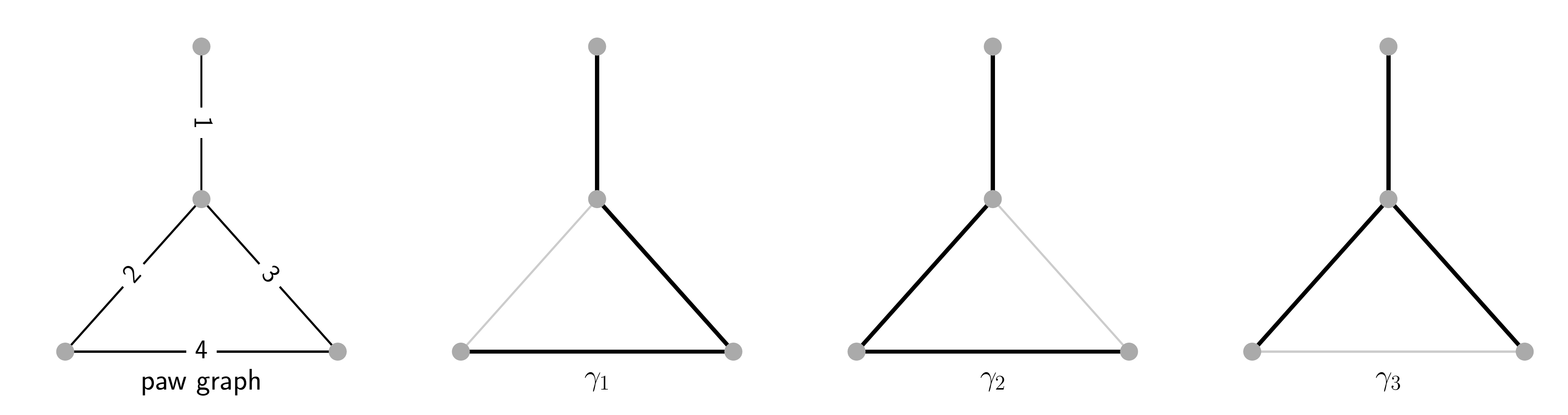}
		\caption{\text{Spanning trees of the paw graph.}}
	\label{fig:paw-trees}
	\end{center}
	\end{figure}
	\begin{exmp}
	Consider the graph in Figure~\ref{fig:game}. This graph has 8 spanning trees, as shown in Figure~\ref{fig:diamond-trees}.  To derive an upper bound on $\Mod_2(\Gamma)$, observe that $\rho\equiv 1/3$ is an admissible density.  Therefore,
	\begin{equation}\label{eq:diamond-ubound}
	\Mod_2(\Gamma) \le \mathcal{E}_2(\rho) = \frac{5}{9}.
	\end{equation}

	Now, consider the pmf $\mu\in\mathcal{P}(\Gamma)$ defined as
	\begin{equation*}
	\mu(\gamma) =
	\begin{cases}
	\tfrac{3}{20} & \text{if } \gamma \text{ contains the diagonal},\\
	\tfrac{2}{20} & \text{otherwise}.
	\end{cases}
	\end{equation*}
	Let $e_d$ be the diagonal edge.	Since there are four spanning trees containing $e_d$,
	\begin{equation*}
	\eta(e_d) = 4\cdot\frac{3}{20} = \frac{3}{5}.
	\end{equation*}
	For all other edges, $e\ne e_d$
	\begin{equation*}
	\begin{split}
	\eta(e) & = \mathbb{P}_{\mu}(e\in\underline{\gamma}) = \mathbb{P}_{\mu}(e\in\underline{\gamma}|e_d\in\underline{\gamma})
	\mathbb{P}_{\mu}(e_d\in\underline{\gamma})
	+ \mathbb{P}_{\mu}(e\in\underline{\gamma}|e_d\notin\underline{\gamma})
	\mathbb{P}_{\mu}(e_d\notin\underline{\gamma})\\
	&= \frac{1}{2}\left(4\cdot\frac{3}{20}\right)
	+ \frac{3}{4}\left(4\cdot\frac{2}{20}\right) = \frac{3}{5}.
	\end{split}	
	\end{equation*}
	With this pmf, all edges are equally likely to occur in $\underline{\gamma}$: $\eta(e)=3/5$ on all edges. Therefore the expected overlap is, by~\eqref{eq:eta-as-EO},
	\begin{equation*}
	\mathbb{E}_{\mu}|\underline{\gamma}\cap\underline{\gamma'}| = 
	\sum_{e\in E}\eta(e)^2 = 
	5\bigg(\frac{3}{5}\bigg)^2 = \frac{9}{5} \geq \Mod_{2}(\Gamma)^{-1},
	\end{equation*}
	which forms a lower bound for $\Mod_{2}(\Gamma)$. This together with the upper bound~\eqref{eq:diamond-ubound} shows that both bounds are attained.

	Although the optimal vectors $\rho^*$ and $\eta^*$ are unique, the optimal pmf $\mu^*$ need not be.  In Section~\ref{sec:game-intro}, we described another pmf, namely
	\begin{equation*}
	\mu = \frac{2}{5}\delta_{\gamma_1} + \frac{1}{5}\delta_{\gamma_5} + \frac{1}{5}\delta_{\gamma_6} + \frac{1}{5}\delta_{\gamma_8}.
	\end{equation*}
	It is straightforward to verify that this pmf also has the property that $\mathbb{P}_\mu(e\in\underline{\gamma})=3/5$ for each edge $e$ and is therefore optimal.
	
	The uniform pmf $\mu\equiv 1/8$, on the other hand, is not optimal in this case, since $\eta(e_d)=1/2$ while for all other edges $e\ne e_d$,
	$\eta(e) = 5/8$.  For the uniform pmf, then, the expected overlap is
	\begin{equation*}
	\mathbb{E}_{\mu}|\underline{\gamma}\cap\underline{\gamma'}| 
	= \sum_{e\in E}\eta(e)^2 = \left(\frac{1}{2}\right)^2
	+ 4\left(\frac{5}{8}\right)^2
	= \frac{29}{16} > \frac{9}{5}.
	\end{equation*}
	\end{exmp}

	\begin{figure}
	\begin{center}
		\includegraphics[width=6in]{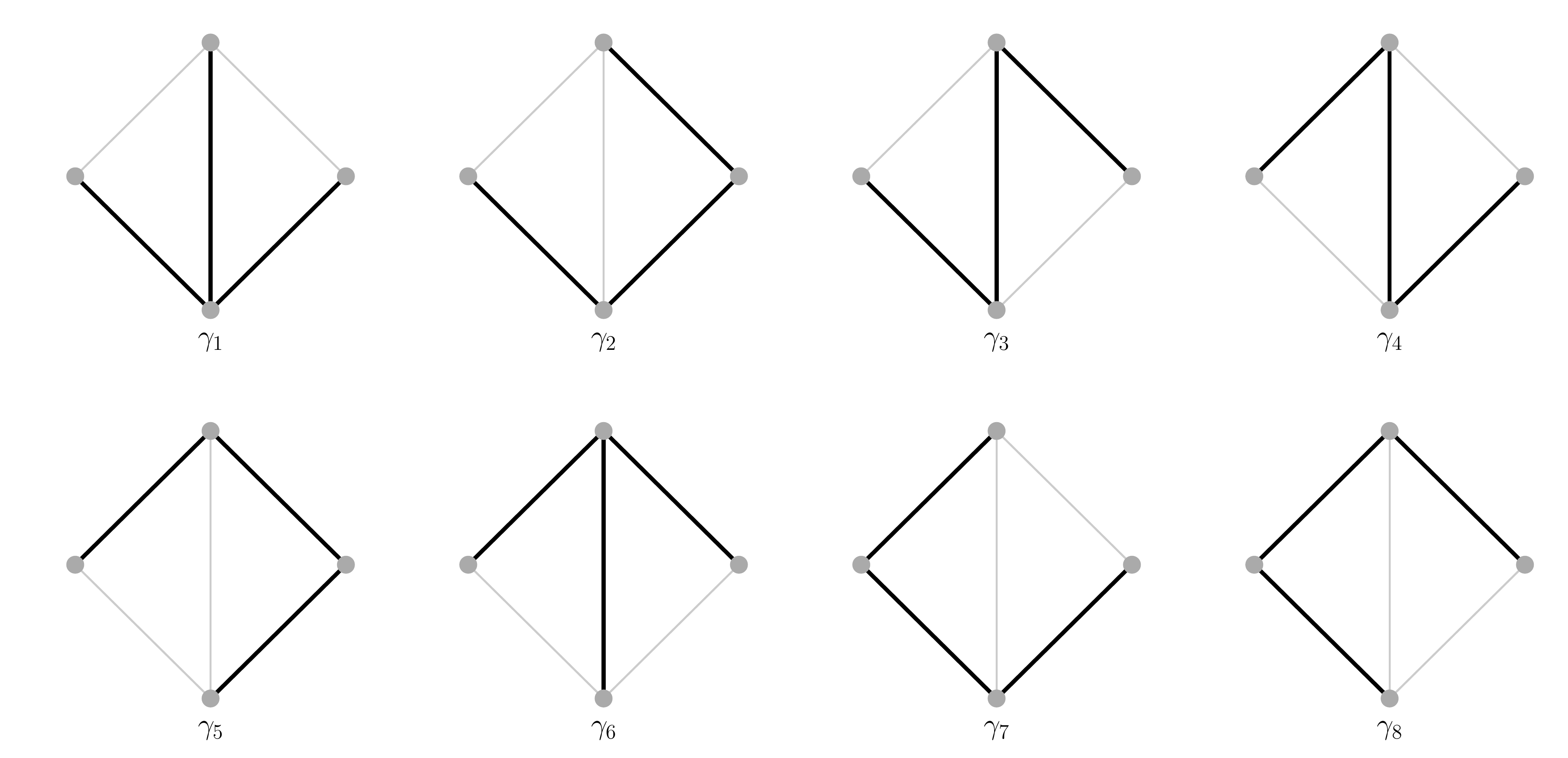}
		\caption{\text{Spanning trees of the diamond graph.}}
		\label{fig:diamond-trees}
	\end{center}
	\end{figure}

	\section{A secure broadcast game}\label{sec:broadcast-games}

	When addressing secure broadcasting (or network security in general) a
	game theoretic approach is natural due to the rich set of mathematical
	tools that this approach provides. In this
	section, we consider the solution of the secure broadcast game described in Section~\ref{sec:introduction}.  We show that 
	the solution to the spanning tree modulus
	problem provides a Nash equilibrium for the game.

	To more carefully define the game, let $G=(V,E)$ be a simple, connected graph representing a communications network and let $\Gamma$ be the family of spanning trees on $G$.  The game between \pB (the broadcaster) and
	\pE (the eavesdropper) proceeds as follows.
	Player \pB broadcasts a message by sending it along a spanning tree
	$\gamma\in\Gamma$.  Player \pE eavesdrops by choosing a single edge $e\in E$ on which to listen.  \pE wins if $e\in\gamma$.  Otherwise, \pB wins.  If we consider this a zero-sum game, we may assume without loss of generality that the payoff matrix $P$ is equal to $\mathcal{N}$.  That is, \pE wishes to maximize $\mathcal{N}(\gamma,e)$ while \pB wishes to minimize it.  Written in this way, it is evident that a pure-strategy solution to the game generally does not exist.  In fact, the following theorem gives necessary and sufficient conditions on the existence of a pure-strategy equilibrium.

	\begin{theorem}\label{thm:pure-strategy}
	A pure-strategy equilibrium to the secure broadcast game exists if and only if $G$ is 1-edge-connected (that is, if there exists an edge $e\in E$ whose removal would disconnect the graph).
	\end{theorem}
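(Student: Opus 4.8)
The plan is to treat the game as a finite zero-sum game with payoff matrix $\mathcal{N}$, whose entries $\mathcal{N}(\gamma,e)=\mathbbm{1}_\gamma(e)\in\{0,1\}$ are minimized by \pB and maximized by \pE. A pure-strategy equilibrium is then precisely a saddle point $(\gamma^*,e^*)$, i.e.\ a pair satisfying
\[
\mathcal{N}(\gamma^*,e)\le \mathcal{N}(\gamma^*,e^*)\le \mathcal{N}(\gamma,e^*)\qquad\text{for all }\gamma\in\Gamma,\ e\in E.
\]
Equivalently, such a pair exists if and only if the upper and lower values of the game coincide,
\[
\max_{e\in E}\min_{\gamma\in\Gamma}\mathcal{N}(\gamma,e)=\min_{\gamma\in\Gamma}\max_{e\in E}\mathcal{N}(\gamma,e),
\]
the inequality $\le$ between these two quantities being automatic. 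The whole proof reduces to evaluating these two values, and the single graph-theoretic fact I will use is that an edge $e$ lies in \emph{every} spanning tree of $G$ if and only if $e$ is a cut edge (its removal disconnects $G$).

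First I would compute the upper value. Assuming $|V|\ge 2$ (so that every spanning tree is nonempty), each fixed tree $\gamma$ contains some edge, whence $\max_{e}\mathcal{N}(\gamma,e)=1$; taking the minimum over $\gamma$ gives $\min_\gamma\max_e\mathcal{N}=1$. Next I would compute the lower value. For a fixed edge $e$, $\min_\gamma\mathcal{N}(\gamma,e)=1$ precisely when every spanning tree uses $e$, and $=0$ otherwise; by the cut-edge fact this equals $1$ iff $e$ is a cut edge. Hence $\max_e\min_\gamma\mathcal{N}=1$ if $G$ has a cut edge and $=0$ if it does not. Comparing, the two values agree (both equal $1$) exactly when $G$ has a cut edge, i.e.\ exactly when $G$ is $1$-edge-connected, which is the claim.

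Alternatively, and perhaps more transparently, I would argue the two implications by explicit construction. For sufficiency: if $e^*$ is a cut edge then every tree contains $e^*$, so choosing any $\gamma^*\in\Gamma$ together with $e^*$ yields a saddle point --- \pE already attains the maximal payoff $1$ and cannot improve, while \pB pays $1$ no matter which tree it picks and so cannot improve either. For necessity: if no cut edge exists, suppose $(\gamma^*,e^*)$ were an equilibrium; since $\gamma^*$ is nonempty, \pE's best response yields payoff $1$, forcing $e^*\in\gamma^*$, but then $G-e^*$ is connected and contains a spanning tree $\gamma'$ avoiding $e^*$, so \pB can deviate to $\gamma'$ and lower the payoff from $1$ to $0$, contradicting equilibrium.

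The argument is essentially routine once framed this way, so there is no substantial obstacle; the only points needing care are the degenerate case $|V|=1$ (where $E=\varnothing$ and the game is vacuous, to be excluded by the standing assumption that the network has at least two nodes) and a clean justification of the cut-edge fact --- namely that $e$ is not a cut edge iff $G-e$ is connected iff $G-e$ contains a spanning tree of $G$ avoiding $e$ --- together with matching the paper's phrasing of ``$1$-edge-connected'' to the existence of a cut edge.
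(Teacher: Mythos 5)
Your proposal is correct and follows essentially the same route as the paper: both hinge on the facts that a bridge lies in every spanning tree (giving \pE a forced win) and that in a 2-edge-connected graph every edge is avoided by some spanning tree (so neither player can force a win), the latter being exactly your observation that the lower value $0$ and upper value $1$ of the game disagree. Your minimax-value formalization and explicit saddle-point construction are just more formal phrasings of the paper's ``neither player can force a win'' argument, with the added (harmless) care about the degenerate case $|V|=1$.
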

	\begin{proof}
	First, suppose that such an edge $e$ exists.  Necessarily, this edge exists in every spanning tree and, thus, $\mathcal{N}(\gamma,e)=1$ for all $\gamma\in\Gamma$.  \pE wins by selecting this edge.  Since \pE can force a win, this provides a pure-strategy equilibrium to the game.

	Now, suppose that $G$ is at least 2-edge-connected.  Assume that \pB plays first, by choosing a tree $\gamma\in\Gamma$.  Then \pE wins by choosing any $e\in\gamma$.  On the other hand, assume that \pE plays first by choosing an edge $e\in E$.  Let $G'$ be the subgraph of $G$ obtained by removing $e$.  By assumption, $G'$ is connected, so there exists a spanning tree $\gamma\in\Gamma$ that is also a spanning tree of $G'$.  \pB wins by selecting this tree.  Since neither player can force a win in this case, there is no pure-strategy equilibrium.
	\end{proof}

	In light of Theorem~\ref{thm:pure-strategy}, a Nash equilibrium for 
	the secure broadcast game will generally involve mixed strategies.  Here we recast the game using probability notation in order to make the connection to modulus more apparent. More precisely, we suppose that \pB chooses a pmf
	$u\in\mathcal{P}(\Gamma)$ while \pE chooses a pmf $v\in\mathcal{P}(E)$.
	This is equivalent to \pB choosing a random tree $\underline{\gamma}\sim u$ while \pE chooses a random edge $\underline{e}\sim v$.  The expected payoff for a particular choice of strategies, then, is given by
	\begin{equation}\label{eq:exp-payoff-one-shot}
	\mathbb{E}_{u,v}[\mathcal{N}(\underline{\gamma},\underline{e})]
	:=  \sum_{\gamma\in\Gamma}\sum_{e\in E}\mathcal{N}(\gamma,e)u(\gamma)v(e).
	\end{equation}
	This quantity has a number of useful interpretations.  For example, if we define $\eta:=\mathcal{N}^T u=\mathbb{P}_u(\cdot\in\underline{\gamma})$ as in~\eqref{eq:eta-as-prob}, then
	\begin{equation}\label{eq:payoff-as-eta-dot-v}
	\mathbb{E}_{u,v}[\mathcal{N}(\underline{\gamma},\underline{e})]
	= \sum_{e\in E}\eta(e)v(e) = \mathbb{E}_v[\eta(\underline{e})].
	\end{equation}
	That is, the expected payoff is equal to the expected value of $\eta$ on an edge randomly selected according to the distribution $v$.
	On the other hand, if we us the definition of $\ell_v(\gamma)$ in~\eqref{eq:rho-length}, then
	\begin{equation}\label{eq:payoff-as-v-length}
	\mathbb{E}_{u,v}[\mathcal{N}(\underline{\gamma},\underline{e})]
	= \sum_{\gamma\in \Gamma}\ell_v(\gamma)u(\gamma)
	= \mathbb{E}_u[\ell_v(\underline{\gamma})].
	\end{equation}
	With this interpretation, the expected payoff is the expected $v$-length of a random spanning tree chosen according to the distribution $u$.  Equations~\eqref{eq:payoff-as-eta-dot-v} and~\eqref{eq:payoff-as-v-length} provide a useful way of looking at necessary and sufficient conditions for solving the game.	We begin by reviewing the relevant concepts in the present context.

	First, recall that a pair of mixed strategies $(u^*,v^*)\in\mathcal{P}(\Gamma)\times\mathcal{P}(E)$ solves the game (in the sense of Nash equilibrium~\cite{Nash1950}) if the following condition holds.
	\begin{equation}\label{eq:saddle-point}
	\mathbb{E}_{u^*,v}[\mathcal{N}(\underline{\gamma},\underline{e})]
	\le
	\mathbb{E}_{u^*,v^*}[\mathcal{N}(\underline{\gamma},\underline{e})]
	\le
	\mathbb{E}_{u,v^*}[\mathcal{N}(\underline{\gamma},\underline{e})]
	\qquad\forall u\in\mathcal{P}(\Gamma)\;\forall v\in\mathcal{P}(E).	
	\end{equation}
	Alternatively, we can express~\eqref{eq:saddle-point} as
	\begin{equation}\label{eq:saddle-point-2}
	\max_{v\in\mathcal{P}(E)}
	\mathbb{E}_{u^*,v}[\mathcal{N}(\underline{\gamma},\underline{e})]
	= \mathbb{E}_{u^*,v^*}[\mathcal{N}(\underline{\gamma},\underline{e})]
	=
	\min_{u\in\mathcal{P}(\Gamma)}
	\mathbb{E}_{u,v^*}[\mathcal{N}(\underline{\gamma},\underline{e})].
	\end{equation}
	This form is particularly enlightening when paired with the minimax theorem, which states that for a general real-valued function $f$ on a product space $X\times Y$,
	\begin{equation*}
	    \sup_{y\in Y}\inf_{x\in X}f(x,y)\le
	    \inf_{x\in X}\sup_{y\in Y}f(x,y).
	\end{equation*}
	When applied to the problem at hand, the minimax theorem becomes
	\begin{equation}\label{eq:minimax}
	\max_{v\in\mathcal{P}(E)}
	\min_{u\in\mathcal{P}(\Gamma)}
	\mathbb{E}_{u,v}[\mathcal{N}(\underline{\gamma},\underline{e})]
	\le
	\min_{u\in\mathcal{P}(\Gamma)}
	\max_{v\in\mathcal{P}(E)}
	\mathbb{E}_{u,v}[\mathcal{N}(\underline{\gamma},\underline{e})].
	\end{equation}
	Nash's existence theorem~\cite{Nash1950} guarantees a mixed-strategy solution to the game and, therefore, that the inequality in~\eqref{eq:minimax} is satisfied as equality.  These observations lead to the following theorem.

	\begin{theorem}\label{thm:necessary-and-sufficient}
	Let $u\in\mathcal{P}(\Gamma)$ and $v\in\mathcal{P}(E)$ and define
	\begin{equation*}
	\eta=\mathcal{N}^Tu\qquad\text{and}\qquad
	\ell_v(\Gamma) = \min_{\gamma\in\Gamma}\ell_v(\gamma).
	\end{equation*}
	Then $(u,v)$ solves the game if and only if
	\begin{equation}\label{eq:necessary-and-sufficient}
	\ell_v(\Gamma) \ge \|\eta\|_\infty.
	\end{equation}

	Moreover, if $(u,v)$ solves the game, then
	\begin{equation}\label{eq:necessary-u}
	\supp u\subseteq\{\gamma\in\Gamma:\ell_v(\gamma)=\ell_v(\Gamma)\}
	\end{equation}
	and
	\begin{equation}\label{eq:necessary-v}
	\supp v\subseteq\{e\in E:\eta(e)=\|\eta\|_\infty\}.
	\end{equation}
	\end{theorem}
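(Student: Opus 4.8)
The plan is to reduce everything to two elementary identities coming from the bilinearity of the payoff. First I would record that, for fixed $u$ with $\eta=\mathcal{N}^Tu$, the map $v\mapsto\mathbb{E}_{u,v}[\mathcal{N}(\underline{\gamma},\underline{e})]=\sum_e\eta(e)v(e)$ is linear over the simplex $\mathcal{P}(E)$, so its maximum is attained at a vertex (a point mass) and equals $\max_e\eta(e)=\|\eta\|_\infty$. Dually, using~\eqref{eq:payoff-as-v-length}, for fixed $v$ the map $u\mapsto\mathbb{E}_{u,v}[\mathcal{N}(\underline{\gamma},\underline{e})]=\sum_\gamma\ell_v(\gamma)u(\gamma)$ is linear over $\mathcal{P}(\Gamma)$, so its minimum is $\min_\gamma\ell_v(\gamma)=\ell_v(\Gamma)$. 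In symbols,
\begin{equation*}
\max_{v'\in\mathcal{P}(E)}\mathbb{E}_{u,v'}[\mathcal{N}(\underline{\gamma},\underline{e})]=\|\eta\|_\infty
\qquad\text{and}\qquad
\min_{u'\in\mathcal{P}(\Gamma)}\mathbb{E}_{u',v}[\mathcal{N}(\underline{\gamma},\underline{e})]=\ell_v(\Gamma).
\end{equation*}
An immediate consequence, obtained by comparing each extremum with the value $w:=\mathbb{E}_{u,v}[\mathcal{N}(\underline{\gamma},\underline{e})]$ realized at the pair itself, is the chain $\ell_v(\Gamma)\le w\le\|\eta\|_\infty$, valid for \emph{every} pair $(u,v)$. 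Thus $\ell_v(\Gamma)\le\|\eta\|_\infty$ holds unconditionally, and the content of~\eqref{eq:necessary-and-sufficient} is precisely that this inequality is forced to be an equality.

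For the forward direction, I would suppose $(u,v)$ satisfies the saddle-point condition~\eqref{eq:saddle-point}. Taking the maximum over $v'$ in the left inequality and the minimum over $u'$ in the right inequality, and invoking the two identities above, yields $\|\eta\|_\infty\le w\le\ell_v(\Gamma)$, which gives~\eqref{eq:necessary-and-sufficient}. For the converse, assume $\ell_v(\Gamma)\ge\|\eta\|_\infty$; combined with the universally valid chain $\ell_v(\Gamma)\le w\le\|\eta\|_\infty$ this forces $\|\eta\|_\infty=w=\ell_v(\Gamma)$. The saddle-point inequalities then follow directly: for any $v'$ we have $\mathbb{E}_{u,v'}[\mathcal{N}(\underline{\gamma},\underline{e})]\le\|\eta\|_\infty=w$, and for any $u'$ we have $\mathbb{E}_{u',v}[\mathcal{N}(\underline{\gamma},\underline{e})]\ge\ell_v(\Gamma)=w$. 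Notably, neither direction needs the minimax theorem or Nash's existence theorem; the saddle point is verified by hand.

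Finally, the support statements~\eqref{eq:necessary-u} and~\eqref{eq:necessary-v} follow from the equalities $w=\|\eta\|_\infty=\ell_v(\Gamma)$ by a complementary-slackness argument. Writing $w=\sum_e\eta(e)v(e)$ and using $\sum_ev(e)=1$, I would rearrange to $\sum_e(\|\eta\|_\infty-\eta(e))v(e)=0$, a sum of nonnegative terms, so each term vanishes and $v(e)>0$ forces $\eta(e)=\|\eta\|_\infty$. The identical manipulation with $w=\sum_\gamma\ell_v(\gamma)u(\gamma)$ gives $\sum_\gamma(\ell_v(\gamma)-\ell_v(\Gamma))u(\gamma)=0$ and hence $u(\gamma)>0$ forces $\ell_v(\gamma)=\ell_v(\Gamma)$. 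I expect the only genuinely delicate point to be the first paragraph's observation that $\ell_v(\Gamma)\le\|\eta\|_\infty$ is automatic: recognizing that the theorem's inequality is really an equality in disguise is what makes the two-sided argument line up, and everything afterward is routine bookkeeping with linear functions on simplices.
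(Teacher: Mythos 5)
Your proof is correct and follows essentially the same route as the paper's: both use the identities $\max_{v'}\mathbb{E}_{u,v'}[\mathcal{N}(\underline{\gamma},\underline{e})]=\|\eta\|_\infty$ and $\min_{u'}\mathbb{E}_{u',v}[\mathcal{N}(\underline{\gamma},\underline{e})]=\ell_v(\Gamma)$ to reduce the saddle-point condition~\eqref{eq:saddle-point-2} to the equality $\|\eta\|_\infty=\ell_v(\Gamma)$, with the automatic inequality $\ell_v(\Gamma)\le\|\eta\|_\infty$ turning this into~\eqref{eq:necessary-and-sufficient}, and the support conditions following from the resulting equalities. The only differences are expository: where the paper cites the weak minimax inequality~\eqref{eq:minimax}, you re-derive it pointwise via the chain $\ell_v(\Gamma)\le w\le\|\eta\|_\infty$, and you spell out the complementary-slackness sums that the paper leaves implicit in the phrase ``this equality can only hold if.''
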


	\begin{proof}
	From~\eqref{eq:payoff-as-eta-dot-v}, we have
	\begin{equation*}
	\max_{v\in\mathcal{P}(E)}
	\mathbb{E}_{u,v}[\mathcal{N}(\underline{\gamma},\underline{e})]
	= 
	\max_{v\in\mathcal{P}(E)}
	\mathbb{E}_v[\eta(\underline{e})]
	= \|\eta\|_\infty.
	\end{equation*}
	Similarly, by~\eqref{eq:payoff-as-v-length},
	\begin{equation*}
	\min_{u\in\mathcal{P}(\Gamma)}
	\mathbb{E}_{u,v}[\mathcal{N}(\underline{\gamma},\underline{e})]
	=
	\min_{u\in\mathcal{P}(\Gamma)}
	\mathbb{E}_u[\ell_v(\underline{\gamma})]
	=
	\ell_v(\Gamma).
	\end{equation*}
	Substituting these two expressions into~\eqref{eq:saddle-point-2} shows that an equilibrium is characterized by the equality
	\begin{equation*}
	\|\eta\|_\infty = \ell_v(\Gamma).
	\end{equation*}
	Moreover, substituting into the minimax theorem~\eqref{eq:minimax} shows that
	\begin{equation*}
		\max_{v\in\mathcal{P}(E)}\ell_v(\Gamma) \le 
		\min_{u\in\mathcal{P}(\Gamma)}\|\eta\|_\infty\qquad.
	\end{equation*}
	Thus,~\eqref{eq:necessary-and-sufficient} is necessary and sufficient to characterize an equilibrium.

	The additional necessary conditions~\eqref{eq:necessary-u} and~\eqref{eq:necessary-v} are a consequence of~\eqref{eq:saddle-point-2}.  Let $(u,v)$ be an equilibrium point and let $\eta=\mathcal{N}^Tu$.  Then,~\eqref{eq:saddle-point-2} implies that
	\begin{equation}\label{eq:eta-max-as-game-value}
	\|\eta\|_\infty = 
	\mathbb{E}_{u,v}[\mathcal{N}(\underline{\gamma},\underline{e})]
	=
	\mathbb{E}_v[\eta(\underline{e})].
	\end{equation}
	This equality can only hold if $v$ is supported on edges where $\eta$ attains its maximum, establishing~\eqref{eq:necessary-v}.  A similar argument establishes~\eqref{eq:necessary-u}.
	\end{proof}

	\begin{remark}
	Theorem~\ref{thm:pure-strategy} can also be seen as a corollary of Theorem~\ref{thm:necessary-and-sufficient}.  Indeed, if $\gamma\in\Gamma$ and $e\in E$ then the pure strategies $u=\delta_\gamma\in\mathcal{P}(\Gamma)$ and $v=\delta_e\in\mathcal{P}(E)$ have the properties that
	\begin{equation*}
	\|\eta\|_\infty = 1\qquad\text{and}\qquad \ell_v(\gamma') = \mathcal{N}(\gamma',e)\quad\forall\gamma'\in\Gamma.
	\end{equation*}
	Thus, by Theorem~\ref{thm:necessary-and-sufficient}, the pair $(u,v)$ solves the game if and only if $e\in\gamma'$ for every $\gamma'\in\Gamma$, that is, if and only if $e$ is a ``bridge'' in the graph.
	\end{remark}

	\section{Using modulus to solve the game}\label{sec:mod-to-solve}

	Now, we show that the solution to the spanning tree modulus problem provides a solution to the game.  The connection is made through the concept of feasible partitions, as defined in~\cite{Chopra1989}.

	\begin{definition}
	A \emph{feasible partition} $Q$ of a graph $G=(V,E)$ is a partition of the
	vertex set $V$ into $k_Q>1$ pieces: $Q=\{V_1,V_2,\ldots,V_{k_Q}\}$ such that
	the subgraph of $G$ induced by $V_i$ is connected for each
	$i=1,2,\ldots,k_Q$.  The edge set $E_Q\subseteq E$ of
	$Q$ is the set of all edges of $E$ that connect distinct pieces of the
	partition.  The \emph{weight} $w(Q)$ of $Q$ is defined as $w(Q)=|E_Q|/(k_Q-1)$.  In the case $k_Q=2$, the definition of a feasible partition coincides with the definition of a global graph cut and $w(Q)=|E_Q|$ is the usual definition of the weight of such a cut.  We denote by $\mathcal{F}(G)$ the set of all feasible partitions of $G$.
	\end{definition}

	An important relationship between spanning trees and feasible partitions arises from the fact that, for a given feasible partition $Q$ and spanning tree $\gamma$, the spanning tree must connect all pieces of the partition, yielding the inequality (see~\cite[(2.2)]{Chopra1989})
	\begin{equation}\label{eq:basic-fp-bound}
	|\gamma\cap E_Q| \ge k_Q-1.
	\end{equation}

	The relationship between modulus and the secure broadcast game is provided by the following theorem, proved in~\cite{albin2018fairest}.
	\begin{theorem}\label{thm:optimal-partition}
		Let $\Gamma$ be the family of spanning trees on the graph $G=(V,E)$ and let $\mu^* \in \mathcal{P}(\Gamma)$ be optimal for the MEO problem~\eqref{eq:MEO-opt}, with expected edge usage $\eta^* = \mathcal{N}^T\mu^*$.  Define
		\begin{equation*}
		E_{Q^*} := \{e\in E:\eta^*(e) = \|\eta^*\|_\infty\}.
		\end{equation*}
		Then $E_{Q^*}$ is the edge set for a feasible partition $Q^*$ of $V$ into $k_{Q^*}>1$ pieces and has the following three properties.
		\begin{eqnarray}
			|\gamma \cap E_{Q^*}| &=& k_{Q^*} - 1 \hspace*{3mm} \forall \gamma \in \supp\ \mu^*,\\
			\eta^*(e) = \|\eta^*\|_\infty &=& w(Q^*)^{-1} \hspace*{3mm} \forall e \in E_{Q^*},
			\label{eq:max-eta-star}\\
			w(Q^*) &=& \min_{Q\in\mathcal{F}(G)}w(Q).
		\end{eqnarray}
	\end{theorem}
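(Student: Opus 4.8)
The plan is to reduce the three asserted identities to two structural claims about the maximal set $E_{Q^*}=\{e:\eta^*(e)=\|\eta^*\|_\infty\}$: (a) that $E_{Q^*}$ is the set of cross-edges of the partition $Q^*$ whose pieces are the connected components of $G-E_{Q^*}$, and that this partition has $k_{Q^*}>1$ pieces; and (b) that $|\gamma\cap E_{Q^*}|=k_{Q^*}-1$ for every $\gamma\in\supp\mu^*$. Claim (b) is exactly the first listed property. Granting (a) and (b), the second property follows by summing: since $\eta^*=\mathcal{N}^T\mu^*$, we have $\sum_{e\in E_{Q^*}}\eta^*(e)=\mathbb{E}_{\mu^*}|\underline{\gamma}\cap E_{Q^*}|=k_{Q^*}-1$ by (b), and because each of the $|E_{Q^*}|$ summands equals $\|\eta^*\|_\infty$, we obtain $\|\eta^*\|_\infty=(k_{Q^*}-1)/|E_{Q^*}|=w(Q^*)^{-1}$, which is \eqref{eq:max-eta-star}.

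Next I would prove the easy global lower bound that pins down minimality. For any feasible partition $Q$ and any $\mu\in\mathcal{P}(\Gamma)$ with $\eta=\mathcal{N}^T\mu$, averaging \eqref{eq:basic-fp-bound} over the random tree gives $\sum_{e\in E_Q}\eta(e)=\mathbb{E}_\mu|\underline{\gamma}\cap E_Q|\ge k_Q-1$, hence $|E_Q|\,\|\eta\|_\infty\ge k_Q-1$, i.e.\ $\|\eta\|_\infty\ge w(Q)^{-1}$. Applied to $\eta^*$ for every $Q\in\mathcal{F}(G)$, this yields $\|\eta^*\|_\infty\ge w(Q)^{-1}$, so $w(Q)\ge\|\eta^*\|_\infty^{-1}=w(Q^*)$ for all $Q$ once the previous paragraph's identity is in hand. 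This is the third property $w(Q^*)=\min_{Q}w(Q)$.

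The substance of the argument is claims (a) and (b), for which the key tool is a complementary-slackness description of $\eta^*$. Writing $\ell_{\eta^*}(\gamma)=\sum_e\mathcal{N}(\gamma,e)\eta^*(e)$ as in \eqref{eq:rho-length}, admissibility of $\rho^*=\Mod_2(\Gamma)\eta^*$ (through \eqref{eta*}) gives $\ell_{\eta^*}(\gamma)\ge\Mod_2(\Gamma)^{-1}$ for every spanning tree $\gamma$; moreover the $\mu^*$-weighted average of $\ell_{\eta^*}(\gamma)$ equals $\sum_e\eta^*(e)^2=\Mod_2(\Gamma)^{-1}$ by \eqref{mod}, so equality $\ell_{\eta^*}(\gamma)=\Mod_2(\Gamma)^{-1}$ must hold for every $\gamma\in\supp\mu^*$. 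With this in place I would run a matroid exchange argument: if $\gamma\in\supp\mu^*$ and $e\in\gamma$ has $\eta^*(e)=\|\eta^*\|_\infty$, then for each edge $f$ in the fundamental cut of $e$ relative to $\gamma$ the tree $\gamma-e+f$ satisfies $\Mod_2(\Gamma)^{-1}\le\ell_{\eta^*}(\gamma-e+f)=\ell_{\eta^*}(\gamma)-\eta^*(e)+\eta^*(f)$, forcing $\eta^*(f)\ge\eta^*(e)=\|\eta^*\|_\infty$ and hence $f\in E_{Q^*}$. Thus the entire fundamental cut of a maximal tree-edge consists of maximal edges.

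Finally, (a) and (b) would follow from this cut-saturation property by a connectivity argument. Every maximal edge lies in some support tree because $\eta^*(e)=\sum_{\gamma\ni e}\mu^*(\gamma)=\|\eta^*\|_\infty>0$. If some $e=(x,y)\in E_{Q^*}$ had both endpoints in a single component of $G-E_{Q^*}$, a path of non-maximal edges joining $x$ to $y$ would have to cross the fundamental cut of $e$ (in a support tree containing $e$) through some edge $f\ne e$, which the previous paragraph forces to be maximal---contradicting that the path avoids $E_{Q^*}$. Hence no maximal edge is interior to a component, so $E_{Q^*}$ is precisely the cross-edge set of the component partition and $k_{Q^*}>1$ since $E_{Q^*}\ne\varnothing$; this is (a). The same crossing argument shows each support tree spans every component using only non-maximal edges, so it links the $k_{Q^*}$ components with exactly $k_{Q^*}-1$ maximal edges, giving (b). The step I expect to demand the most care is the interface between the convex-optimization and combinatorial parts---specifically, pinning down that the tight trees are exactly $\supp\mu^*$ and that the exchange inequalities are applied to genuine spanning trees---after which the crossing argument is comparatively routine.
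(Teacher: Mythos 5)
Your proof is correct, but there is nothing in this paper to compare it against line by line: the paper does not prove Theorem~\ref{thm:optimal-partition} at all --- it quotes the result from~\cite{albin2018fairest}. Your argument is therefore a genuinely independent, self-contained proof, and each step checks out. The reduction of~\eqref{eq:max-eta-star} and of the minimality of $w(Q^*)$ to your claims (a) and (b) is sound (both are averaging arguments over $\mu^*$ combined with~\eqref{eq:basic-fp-bound}); the complementary-slackness step is legitimate, since $\rho^*=\Mod_2(\Gamma)\,\eta^*$ is admissible by~\eqref{eta*}, and optimality of $\mu^*$ in~\eqref{mod} gives $\sum_{\gamma}\mu^*(\gamma)\ell_{\eta^*}(\gamma)=\sum_{e}\eta^*(e)^2=\Mod_2(\Gamma)^{-1}$, forcing $\ell_{\eta^*}(\gamma)=\Mod_2(\Gamma)^{-1}$ for every $\gamma\in\supp\mu^*$; and the exchange inequality is indeed applied to genuine spanning trees, since $\gamma-e+f$ is a spanning tree whenever $f$ lies in the fundamental cut of $e$ with respect to $\gamma$. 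Two small remarks. First, in claim (b) the phrase ``the same crossing argument'' deserves one explicit sentence: if $x,y$ lie in one piece and some maximal edge sat on the $\gamma$-path from $x$ to $y$, its fundamental cut would separate $x$ from $y$, so a non-maximal $x$--$y$ path would have to cross it, a contradiction; hence each support tree restricts to a spanning tree of each piece, and counting $|V|-1=(|V|-k_{Q^*})+|\gamma\cap E_{Q^*}|$ gives (b). Second, your closing worry about identifying the tight trees \emph{exactly} with $\supp\mu^*$ is unnecessary: only the inclusion $\supp\mu^*\subseteq\{\gamma:\ell_{\eta^*}(\gamma)=\Mod_2(\Gamma)^{-1}\}$ is needed, and the reverse inclusion is false in general (on the house graph every tree is tight, yet $\gamma_2,\gamma_4$ lie in no optimal support). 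As for what each approach buys: the citation keeps the paper short and ties $Q^*$ to the deflation theory of~\cite{albin2018fairest} and to Cunningham's strength~\cite{cunningham1985optimal}, which the paper leans on later; your proof uses only facts already stated here --- \eqref{eta*}, admissibility of $\rho^*$, and~\eqref{eq:basic-fp-bound} --- and would make the result self-contained.
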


	This provides the solution to the secure broadcast game as follows.

	\begin{theorem}\label{thm:one-shot-solution}
	Using the definitions from Theorem~\ref{thm:optimal-partition}, let $u^*=\mu^*\in\mathcal{P}(\Gamma)$ and let $v^*\in\mathcal{P}(E)$ be the uniform distribution on $E_{Q^*}$. Then $(u^*,v^*)$ is a Nash equilibrium solution to the game.  Moreover,
	\begin{equation}\label{eq:game-value}
	\mathbb{E}_{u^*,v^*}[\mathcal{N}(\underline{\gamma},\underline{e})] =
	\|\eta^*\|_{\infty} = w(Q^*)^{-1}.
	\end{equation}
	\end{theorem}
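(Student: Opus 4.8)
The plan is to verify that the pair $(u^*,v^*)$ satisfies the necessary and sufficient condition \eqref{eq:necessary-and-sufficient} from Theorem~\ref{thm:necessary-and-sufficient}. Since $u^*=\mu^*$, the associated expected-usage vector is exactly $\eta=\mathcal{N}^Tu^*=\eta^*$, so the quantity $\|\eta\|_\infty$ appearing in the criterion is $\|\eta^*\|_\infty$, which by \eqref{eq:max-eta-star} equals $w(Q^*)^{-1}$. Thus the whole argument reduces to computing $\ell_{v^*}(\Gamma)$ and checking that it is at least $w(Q^*)^{-1}$.

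First I would compute $\ell_{v^*}(\gamma)$ for an arbitrary spanning tree $\gamma$. Since $v^*$ is the uniform distribution on $E_{Q^*}$, we have $v^*(e)=|E_{Q^*}|^{-1}$ for $e\in E_{Q^*}$ and $v^*(e)=0$ otherwise, so that
\begin{equation*}
\ell_{v^*}(\gamma) = \sum_{e\in E}\mathcal{N}(\gamma,e)v^*(e) = \frac{|\gamma\cap E_{Q^*}|}{|E_{Q^*}|}.
\end{equation*}
Next I would invoke the feasible-partition bound \eqref{eq:basic-fp-bound}: because $E_{Q^*}$ is the edge set of the feasible partition $Q^*$ guaranteed by Theorem~\ref{thm:optimal-partition}, every spanning tree satisfies $|\gamma\cap E_{Q^*}|\ge k_{Q^*}-1$. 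Combining this with $w(Q^*)=|E_{Q^*}|/(k_{Q^*}-1)$ gives
\begin{equation*}
\ell_{v^*}(\gamma)\ge\frac{k_{Q^*}-1}{|E_{Q^*}|}=w(Q^*)^{-1}\qquad\text{for every }\gamma\in\Gamma,
\end{equation*}
hence $\ell_{v^*}(\Gamma)=\min_{\gamma\in\Gamma}\ell_{v^*}(\gamma)\ge w(Q^*)^{-1}=\|\eta^*\|_\infty$. This is precisely \eqref{eq:necessary-and-sufficient}, so Theorem~\ref{thm:necessary-and-sufficient} immediately yields that $(u^*,v^*)$ is a Nash equilibrium.

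For the value identity \eqref{eq:game-value}, I would use \eqref{eq:payoff-as-eta-dot-v}. Since $v^*$ is supported on $E_{Q^*}$, on which $\eta^*$ is constantly equal to $\|\eta^*\|_\infty$, the expected payoff is
\begin{equation*}
\mathbb{E}_{u^*,v^*}[\mathcal{N}(\underline{\gamma},\underline{e})]=\sum_{e\in E}\eta^*(e)v^*(e)=\|\eta^*\|_\infty\sum_{e\in E_{Q^*}}v^*(e)=\|\eta^*\|_\infty,
\end{equation*}
which equals $w(Q^*)^{-1}$ by \eqref{eq:max-eta-star}.

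I do not anticipate a genuine obstacle: once Theorems~\ref{thm:necessary-and-sufficient} and~\ref{thm:optimal-partition} are in hand, the argument is a short verification, and the only step requiring any thought is recognizing that uniform weighting on $E_{Q^*}$ converts the length $\ell_{v^*}(\gamma)$ into a normalized intersection count to which the partition bound \eqref{eq:basic-fp-bound} directly applies. The one subtlety worth flagging is that \eqref{eq:necessary-and-sufficient} is stated as an inequality, yet at equilibrium the minimax reasoning inside Theorem~\ref{thm:necessary-and-sufficient} forces $\ell_{v^*}(\Gamma)=\|\eta^*\|_\infty$; this is consistent with the chain above, since every tree in $\supp\mu^*$ attains the partition bound with equality by the first property in Theorem~\ref{thm:optimal-partition}.
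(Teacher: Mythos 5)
Your proposal is correct and follows essentially the same path as the paper's proof: verify condition~\eqref{eq:necessary-and-sufficient} of Theorem~\ref{thm:necessary-and-sufficient} by computing $\ell_{v^*}(\gamma)=|\gamma\cap E_{Q^*}|/|E_{Q^*}|$ and applying the feasible-partition bound~\eqref{eq:basic-fp-bound} together with~\eqref{eq:max-eta-star}. The only cosmetic difference is that you establish the first equality in~\eqref{eq:game-value} by direct computation over $\supp v^*$, where the paper instead cites~\eqref{eq:eta-max-as-game-value}; the two steps are equivalent.
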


	\begin{proof}
	In order to verify that the given mixed strategies solve the game, we use Theorem~\ref{thm:necessary-and-sufficient}, which shows that it is sufficient to establish that~\eqref{eq:necessary-and-sufficient} holds.  Equation~\eqref{eq:max-eta-star} establishes the second equality in~\eqref{eq:game-value}.  Let $\gamma\in\Gamma$ be any spanning tree.  Then, since $Q^*$ is a feasible partition,~\eqref{eq:basic-fp-bound} shows that
	\begin{equation*}
	\ell_{v^*}(\gamma) = \sum_{e\in E}\mathcal{N}(\gamma,e)v^*(e)
	= \frac{1}{|E_{Q^*}|}\sum_{e\in E_{Q^*}}\mathcal{N}(\gamma,e)
	=\frac{|\gamma\cap E_{Q^*}|}{|E_{Q^*}|}
	\ge \frac{k_{Q^*}-1}{|E_{Q^*}|} = w(Q^*)^{-1} = \|\eta^*\|_\infty.
	\end{equation*}
	Taking the minimum with respect to $\gamma\in\Gamma$ establishes~\eqref{eq:necessary-and-sufficient}.  The first equality in~\eqref{eq:game-value} then follows from~\eqref{eq:eta-max-as-game-value}.
	\end{proof}

	\begin{remark}
	Although it was not formulated in the present game-theoretic notation, Cunningham~\cite{cunningham1985optimal} studied the quantity $w(Q^*)$ in~\eqref{eq:max-eta-star} (called the \emph{strength} of $G$) and provided an algorithm for finding $Q^*$.
	\end{remark}
	
	\subsection{\texorpdfstring{$1$}{1}-modulus versus \texorpdfstring{$2$}{2}-modulus}
	\label{sec:1-vs-2}
	
	Above, we have given a method for solving the broadcast game using the solution
	of a $2$-modulus problem.  The solution given in~\cite[Theorem~2]{gueye2010design}, on the other hand, is equivalent to the solution of the corresponding $1$-modulus problem.  Indeed, the Lagrangian dual to the $1$-modulus problem is (see~\cite[Equation~(7)]{albin2015modulus})
	\begin{equation} \label{eq:1modprob2}
	\begin{split}
	\text{maximize} &\hspace{8mm} \textbf{1}^T\lambda, \\
	\text{subject to} &\hspace{8mm} \mathcal{N}^T\lambda \le \textbf{1}, \lambda \ge \textbf{0}.
	\end{split}
	\end{equation}
	The connection between~\eqref{eq:1modprob2} and~\cite[Equation~(16)]{gueye2010design} can be made through the probabilistic interpretation of modulus~\cite{albin2016minimal}.  If $\lambda$ in~\eqref{eq:1modprob2} is split as $\lambda=\nu\mu$ with $\nu \ge 0$ and $\mu \in \mathcal{P}(\Gamma)$, then an optimal $\lambda^*$ satisfies $\lambda^*=w(Q^*)\mu_\infty^*$, where $\mu_\infty^*$ is any pmf satisfying $\max\limits_{e\in E}\mathbb{E}_{\mu_\infty^*}[\mathcal{N}(\underline{\gamma},e)]=w(Q^*)^{-1}$. This implies that any such optimal $\mu_\infty^*$ is a solution to~\cite[Equation~(16)]{gueye2010design}.  Thus, although~\cite{gueye2010design} does not explicitly use the vocabulary of modulus, we will refer to this method of solving the game as the $1$-modulus approach.
	
	While conceptually similar, the $1$-modulus and $2$-modulus approaches to the game have some significant differences.  The $1$-modulus approach involves first finding a \emph{critical subset of edges} (essentially, finding $Q^*$).  This provides the value of the game as well as an optimal strategy for \pE.  Although not explicitly described in the paper, an optimal strategy for \pB would presumably then be found by solving the linear program~\cite[Equation~(16)]{gueye2010design}.
	Recently in~\cite{szeszler2017security} the value of the game and strategies of both players were discussed in a more general matroid setting using a different approach. 
	
	A $2$-modulus approach to solving the game, on the other hand,  proceeds by first finding the optimal $\rho^*$ for~\eqref{eq:cvxp} by using an exterior-point algorithm as described in~\cite{albin2016minimal}.  The edges of $Q^*$ are then readily obtained by locating the edges on which $\rho^*$ is maximized, giving an optimal strategy for \pE.  Although the algorithm described only provides an approximation $\rho'\approx\rho^*$ to within some specified tolerance, it is nevertheless possible to determine $Q^*$ exactly by using a consequence of the \emph{deflation process} described in~\cite{albin2018fairest}.  Namely that the value $\rho^*(e)/\Mod_2(\Gamma)$ is a proper fraction on every edge with denominator bounded by $|E|$.  Thus, by selecting a suitably small tolerance, one can determine on exactly which edges $\rho^*$ is maximized by considering the edges on which $\rho'$ is approximately maximized.
	
	Moreover, the optimal dual variables $\lambda^*$ can be rescaled to produce an optimal $\mu^*$ as described above, thus giving an optimal strategy for \pB with no additional work.  This is particularly effective on larger graphs with large numbers of spanning trees; by using an exterior-point approach, one typically only needs to explore a tiny fraction of the constraints in order to get an accurate solution.
	
	Another interesting aspect of the difference between the $1$-modulus and $2$-modulus approaches to the game is the fact that the $2$-modulus problem is more restrictive; a $2$-modulus solution will also be a $1$-modulus solution, but not the other way around.  A simple illustration of this idea can be seen in the following example.
	\begin{figure}
		\begin{center}
		    \includegraphics[width=0.5\textwidth]{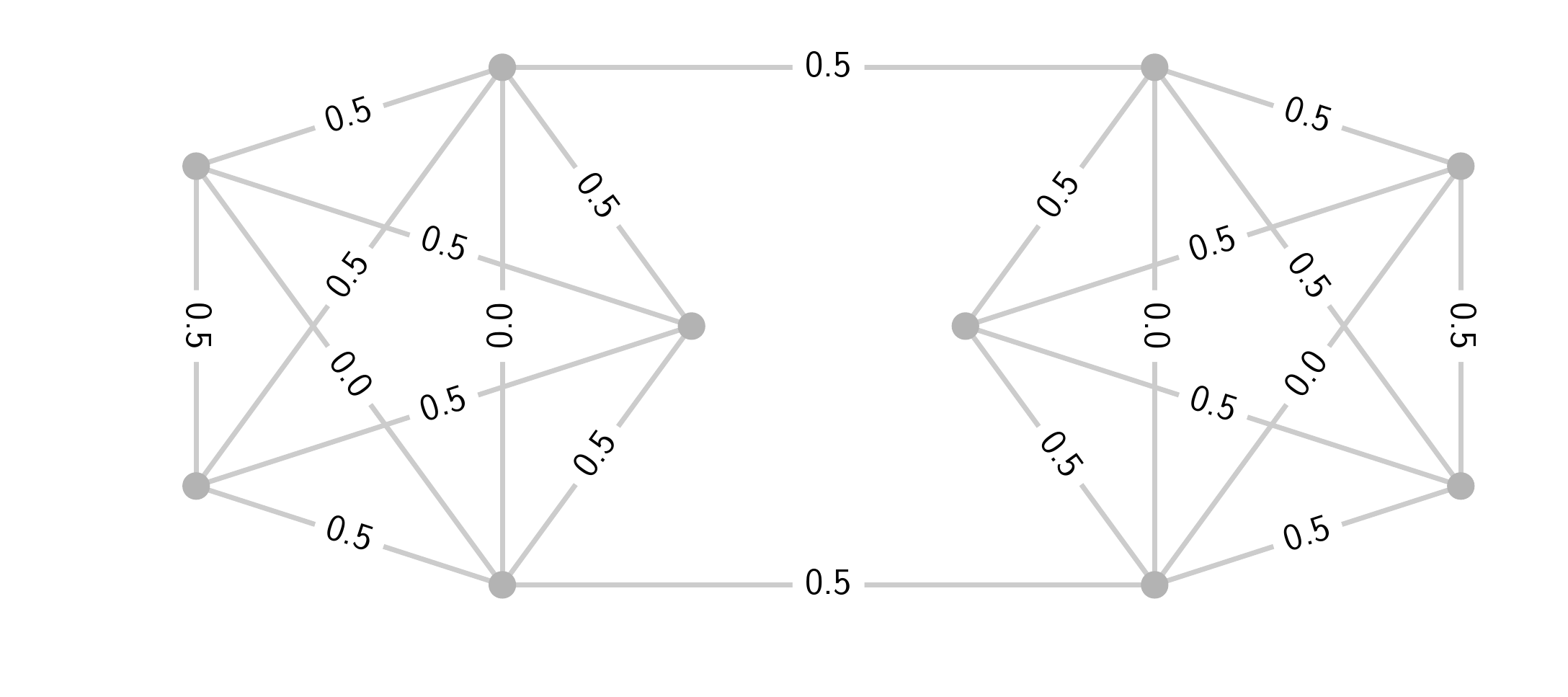}%
			\includegraphics[width=0.5\textwidth]{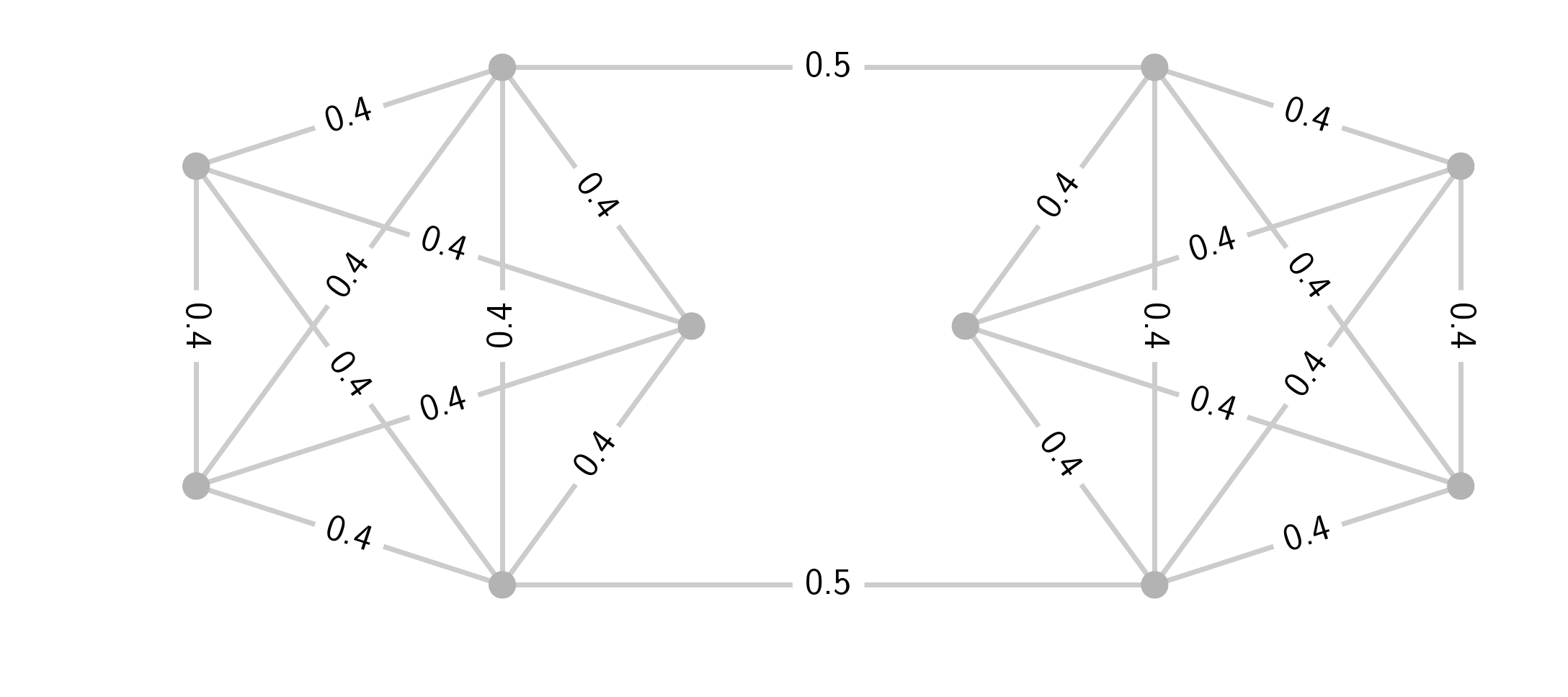}
			\caption{On the left, one possible choice of $\eta_\infty^*=\mathcal{N}^T\mu_\infty^*$ for the $1$-modulus problem, on the right, the unique $\eta^*=\mathcal{N}^T\mu^*$ for the $2$-modulus problem.}
			\label{fig:2-mod-example}
		\end{center}
	\end{figure}
	
	Consider the graph shown in Figure~\ref{fig:2-mod-example}, formed by connecting two copies of $K_5$ by two bridges.  Since at least one of the two bridges must be used in any spanning tree, the value of the secure broadcast game must be at least 1/2.  An optimal strategy for \pB can be constructed as follows.  Let $T_1$ and $T_2$ be the disjoint spanning trees of $G$ shown in Figure~\ref{fig:2-mod-example-trees}. If $\mu$ is the pmf that chooses between these two trees with 1/2 probability each, then the maximum expected edge usage is 1/2, attained on the edges of $\gamma_1\cup\gamma_2$, as shown on the left of Figure~\ref{fig:2-mod-example}.  The optimal strategy for \pE is to choose between the two bridge edges uniformly.
	
    On the other hand, consider the $2$-modulus solution.  In this case, even though the optimal pmf $\mu^*$ may not be unique, the optimal expected edge usage $\eta^*$ is.  These values are shown on the right of Figure~\ref{fig:2-mod-example}.  Observe that, following this strategy, \pB uses each non-bridge edge only 2/5 of the time.
    
    The difference between the two solutions can be understood through the probabilistic interpretation of modulus.  The $1$-modulus approach corresponds to finding a pmf $\mu$ and corresponding expected edge usage vector $\eta=\mathcal{N}^T\mu$ that minimizes $\|\eta\|_\infty$.  The $2$-modulus approach, on the other hand, corresponds to minimizing the variance of $\eta$ (see~\cite{albin2018fairest}).  Both options shown in Figure~\ref{fig:2-mod-example} minimize the maximum value, but only the one on the right minimizes the variance.
	
	\begin{figure}
	\begin{center}
	    \includegraphics[width=0.5\textwidth]{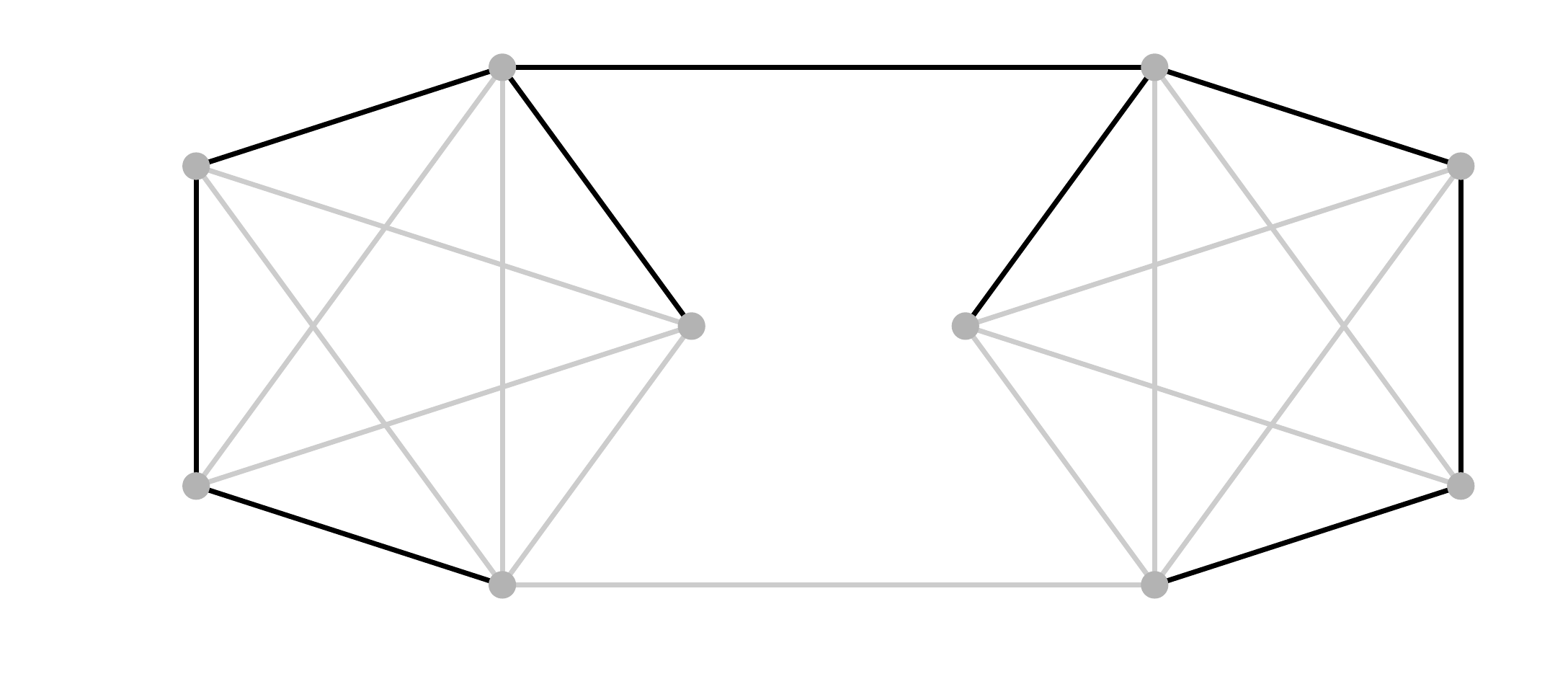}%
		\includegraphics[width=0.5\textwidth]{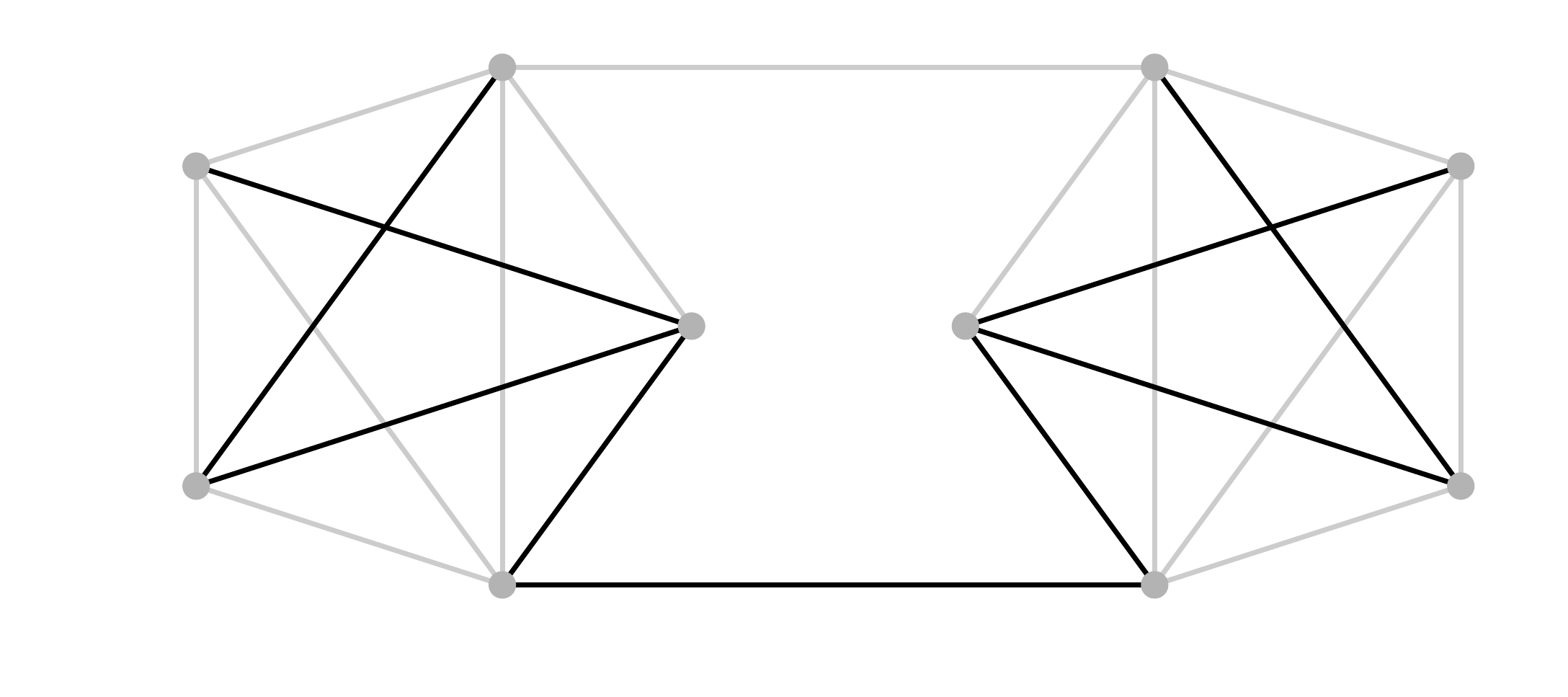}
		\caption{A pair of disjoint spanning trees, $T_1$ and $T_2$.}
		\label{fig:2-mod-example-trees}
	\end{center}
    \end{figure}
	
	\section{Examples}\label{sec:algorithms-and-examples}
	
	In this section, we demonstrate the connection between modulus and the broadcast game with a few illustrative examples.

	\subsection{The house graph}

	As a simple example of the theory developed above, consider the broadcast
	game on the house graph shown in Figure~\ref{fig:house-spanning-trees}. We shall
	demonstrate how the theory of modulus can be used to solve the secure
	broadcast game, using the theory developed in~\cite{albin2018fairest}.  We begin with a
	definition of the concept of a homogeneous graph.

	\begin{definition}
	A graph $G$ is said to be \emph{homogeneous} (with respect to spanning tree modulus) if the unique edge usage probability vector $\eta^*=\mathcal{N}^T\mu^*$ is constant (i.e., parallel to the vector $\mathbf{1}$).  A graph that is not homogeneous is called \emph{nonhomogeneous}.
	\end{definition}

	Homogeneous graphs play a special role in the theory of spanning tree modulus; they essentially form a collection of ``atoms'' into which any graph can be decomposed.  The reader is directed to~\cite{albin2018fairest} for details.  Here, we reproduce one important theorem~\cite[Corollary 4.5]{albin2018fairest}

	\begin{theorem}\label{thm:homogeneous}
	A graph $G$ is homogeneous if an only if there exists a pmf $\mu\in\mathcal{P}(\Gamma)$ so that $\mathbb{P}_\mu(e\in\underline{\gamma})$ is independent of $e\in E$.  If such a pmf exists, it is optimal for the MEO problem~\eqref{eq:MEO-opt} and
	\begin{equation*}
	\mathbb{P}_\mu(e\in\underline{\gamma}) = \frac{|V|-1}{|E|}.
	\end{equation*}
	\end{theorem}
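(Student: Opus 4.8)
The plan is to reduce the whole statement to one elementary counting identity combined with the Cauchy--Schwarz inequality, invoking the uniqueness of $\eta^*$ established just after~\eqref{eta*}. The key observation is that for \emph{any} pmf $\mu\in\mathcal{P}(\Gamma)$ with associated usage vector $\eta=\mathcal{N}^T\mu$, summing over all edges gives
\begin{equation*}
\sum_{e\in E}\eta(e) = \sum_{e\in E}\mathbb{E}_\mu[\mathcal{N}(\underline{\gamma},e)] = \mathbb{E}_\mu\!\left[\sum_{e\in E}\mathcal{N}(\underline{\gamma},e)\right] = \mathbb{E}_\mu|\underline{\gamma}| = |V|-1,
\end{equation*}
since every spanning tree has exactly $|V|-1$ edges. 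Thus $\sum_{e}\eta(e)$ takes the same value for every $\mu$, and this single fact drives both the numerical value in the theorem and the optimality argument.

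For the forward (``only if'') direction, suppose $G$ is homogeneous. By definition the unique vector $\eta^*=\mathcal{N}^T\mu^*$ is parallel to $\mathbf{1}$, so taking $\mu=\mu^*$ immediately yields a pmf for which $\mathbb{P}_\mu(e\in\underline{\gamma})=\eta^*(e)$ is independent of $e$. The constant value follows from the identity above: writing $\eta^*\equiv c$, we get $c|E|=|V|-1$, hence $c=(|V|-1)/|E|$.

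For the reverse (``if'') direction, suppose some $\mu\in\mathcal{P}(\Gamma)$ has $\eta(e)=\mathbb{P}_\mu(e\in\underline{\gamma})$ constant; the identity forces $\eta\equiv(|V|-1)/|E|$. To see that this $\mu$ is MEO-optimal, I would apply Cauchy--Schwarz (equivalently the power-mean inequality) to obtain, for every competitor $\mu'\in\mathcal{P}(\Gamma)$ with usage vector $\eta'$,
\begin{equation*}
\sum_{e\in E}\eta'(e)^2 \ge \frac{1}{|E|}\left(\sum_{e\in E}\eta'(e)\right)^2 = \frac{(|V|-1)^2}{|E|},
\end{equation*}
where the final equality again uses the universal identity. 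Equality in Cauchy--Schwarz holds precisely when $\eta'$ is constant, so our $\mu$ attains this lower bound and is therefore optimal for~\eqref{eq:MEO-opt}. Since the optimal usage vector $\eta^*$ is unique, optimality of $\mu$ forces $\eta=\eta^*$; hence $\eta^*$ is constant and $G$ is homogeneous by definition.

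The argument is essentially self-contained, and the only step requiring care is the reverse direction, where one must recognize that the Cauchy--Schwarz bound is a \emph{uniform} lower bound across all of $\mathcal{P}(\Gamma)$ — which is exactly what the identity $\sum_e\eta(e)=|V|-1$ guarantees — and then use the uniqueness of $\eta^*$ to transfer optimality of $\mu$ back to the defining property of homogeneity. I do not anticipate any serious obstacle beyond this bookkeeping.
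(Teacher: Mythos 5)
Your proof is correct and covers all three assertions of the theorem: the equivalence, the optimality claim, and the value $(|V|-1)/|E|$. Note that the paper itself gives no proof of this statement; it is imported verbatim as Corollary~4.5 of~\cite{albin2018fairest}, so there is no in-paper argument to compare against. Your argument is the natural self-contained one, and it hinges on exactly the right two facts: the universal identity $\sum_{e\in E}\eta(e)=\mathbb{E}_\mu|\underline{\gamma}|=|V|-1$, valid for every $\mu\in\mathcal{P}(\Gamma)$ because every spanning tree of a connected graph has $|V|-1$ edges, which turns the MEO objective $\sum_{e}\eta(e)^2$ into a minimization of a sum of squares with prescribed total (equivalently, of the variance of $\eta$); and the uniqueness of $\eta^*$ recorded after~\eqref{eta*}, which is what lets you pass from ``this particular $\mu$ is optimal'' back to ``the optimal usage vector is constant,'' i.e., to homogeneity as defined. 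One cosmetic remark: the equality-condition clause of Cauchy--Schwarz is not actually needed anywhere --- that your $\mu$ attains the bound $(|V|-1)^2/|E|$ is a direct computation from $\eta\equiv(|V|-1)/|E|$, and the fact that every optimal competitor has constant usage already follows from uniqueness of $\eta^*$ --- but its inclusion does no harm.
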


	To see how this theorem applies in the case of the house, first note that the house has 5 vertices and 6 edges, so $(|V|-1)/|E|=2/3$.  If we can find a pmf $\mu$ that gives every edge a $2/3$ probability of inclusion in the random tree $\underline{\gamma}$, then Theorem~\ref{thm:homogeneous} proves the optimality of $\mu$.  In fact, there are infinitely many such $\mu$ in this case.  For example, using the enumeration of spanning trees in Figure~\ref{fig:house-spanning-trees}, consider the pmf $\mu$ that selects uniformly among the three trees $\gamma_1$, $\gamma_9$ and $\gamma_{10}$.  It is straightforward to verify that each edge appears in exactly two of these three trees and, therefore, that $\mathbb{P}_\mu(e\in\underline{\gamma})=2/3$ for this choice of $\mu$.

	\begin{figure}
		\begin{center}
			\includegraphics[width=\textwidth]{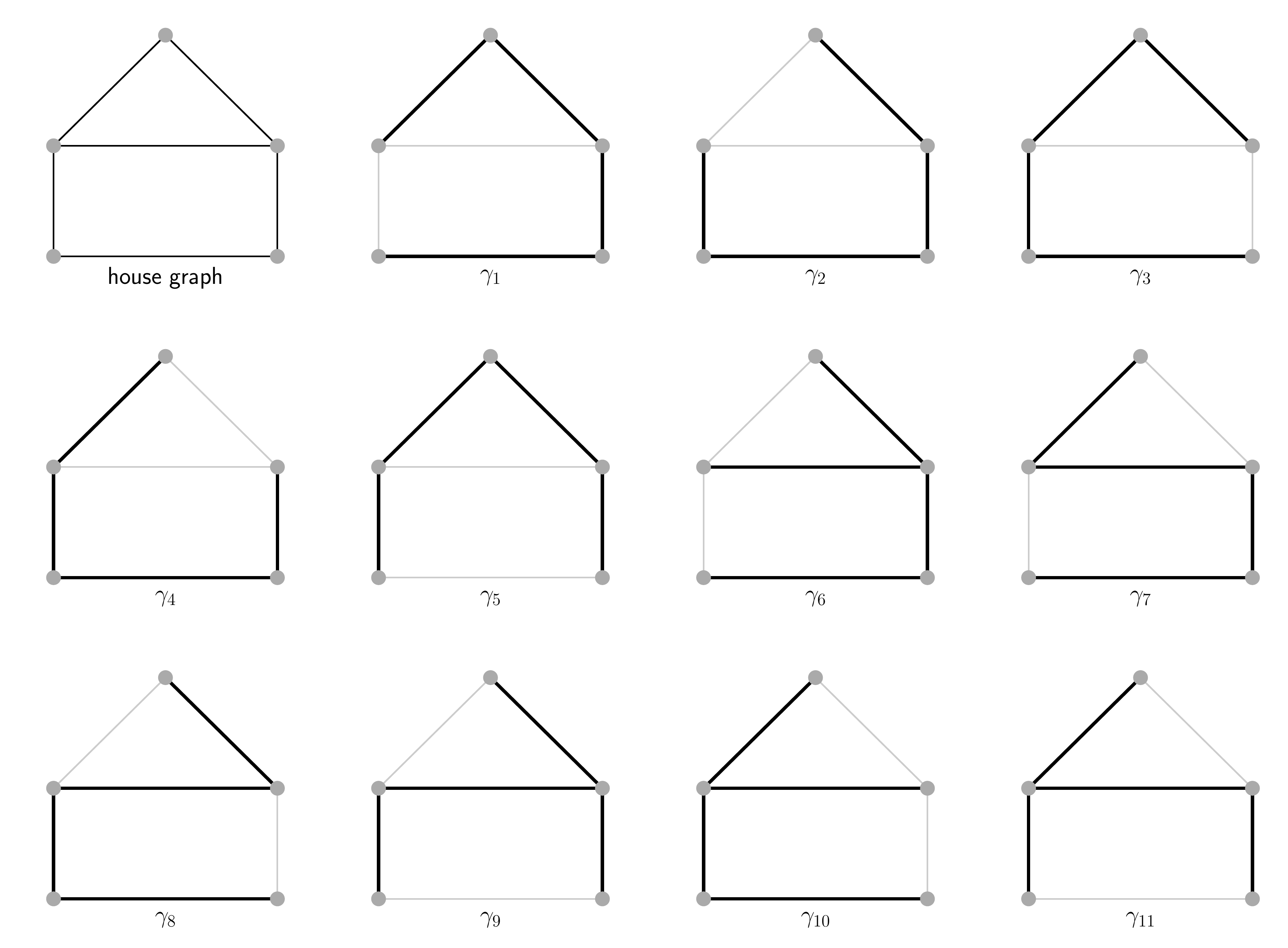}
			\caption{The house graph and its spanning trees.}
			\label{fig:house-spanning-trees}
		\end{center}
	\end{figure}

	Once one optimal pmf $\mu^*$ is found, the set of all optimal pmfs can be expressed as the polyhedron
	\begin{equation*}
	\left\{z\in\mathbb{R}^{\Gamma} : \mathcal{N}^Tz = 0,\quad \mathbf{1}^Tz = 0,\quad \mu^* + z \ge 0 \right\}.
	\end{equation*}
	For the house, this polyhedron can be shown to have six vertices, each of the form
	$\mu = \frac{1}{3}\left(\delta_{\gamma_i}+\delta_{\gamma_j}+\delta_{\gamma_k}\right)$.  In other words, the polyhedron of optimal pmfs is generated by six pmfs, each of which is uniformly distributed on a set of three spanning trees.  These six sets are
	\begin{equation*}
	\{  \gamma_{1},  \gamma_{9},  \gamma_{10}  \},
	\{  \gamma_{1},  \gamma_{8},  \gamma_{11}  \},
	\{  \gamma_{5},  \gamma_{7},  \gamma_{8}  \},
	\{  \gamma_{3},  \gamma_{6},  \gamma_{11}  \},
	\{  \gamma_{3},  \gamma_{7},  \gamma_{9}  \},\text{ and } 
	\{  \gamma_{5},  \gamma_{6},  \gamma_{10}  \}.
	\end{equation*}

	It is interesting to note that two spanning trees, $\gamma_2$ and $\gamma_4$, are absent from these sets.  In other words, neither $\gamma_2$ nor $\gamma_4$ is in the support of any optimal pmf.  These two trees are examples of the \emph{forbidden trees} defined in~\cite{albin2018fairest}.  There are several ways to see why such forbidden trees might exist in general.  Here, we provide a proof specific to the house graph.

	\begin{theorem}
	If $\mu\in\mathcal{P}(\Gamma)$ is optimal for the MEO problem~\eqref{eq:MEO-opt} on the house graph, then
	\begin{equation*}
	\supp\mu\subset \Gamma\setminus\{\gamma_2,\gamma_4\},
	\end{equation*}
	using the spanning tree enumeration given in Figure~\ref{fig:house-spanning-trees}.
	\end{theorem}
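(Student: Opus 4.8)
The plan is to exploit the fact, established in the preceding discussion of the house, that the graph is homogeneous: the unique optimal edge-usage vector is $\eta^*\equiv (|V|-1)/|E| = 2/3$. By Theorem~\ref{thm:homogeneous} together with the uniqueness of $\eta^*$ recorded in~\eqref{eta*}, a pmf $\mu\in\mathcal{P}(\Gamma)$ is optimal for~\eqref{eq:MEO-opt} if and only if $\mathcal{N}^T\mu=\eta^*$, i.e. $\mathbb{P}_\mu(e\in\underline{\gamma})=2/3$ for every edge $e$. The key observation is that this pins down the value of every linear functional of $\eta$ uniformly over all optimal pmfs: for any fixed $\phi\in\mathbb{R}^E$ and any optimal $\mu$,
$\mathbb{E}_\mu[\ell_\phi(\underline{\gamma})] = \phi^T\mathcal{N}^T\mu = \phi^T\eta^* = \tfrac23\,\mathbf{1}^T\phi$,
independent of $\mu$. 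I will combine this with a well-chosen $\phi$ and an averaging (complementary-slackness) argument to force trees out of the support.

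Concretely, I would let $F\subset E$ be the three-edge set consisting of the edges of the $4$-cycle (the ``square'') other than the edge it shares with the $3$-cycle (the ``roof''); equivalently, $F$ is the set of edges incident to the two adjacent degree-two ``base'' vertices of the house. Taking $\phi=\mathbbm{1}_F$ gives $\ell_\phi(\gamma)=|\gamma\cap F|$ and $\tfrac23\mathbf{1}^T\phi=\tfrac23|F|=2$, so every optimal $\mu$ satisfies $\mathbb{E}_\mu[\,|\underline{\gamma}\cap F|\,]=2$. The main combinatorial step is to show that $|\gamma\cap F|\ge 2$ for every spanning tree $\gamma$. This holds because each of the three $2$-element subsets of $F$ is an edge cut of $G$: two edges of $F$ incident to the same base vertex isolate that vertex, while the remaining pair is precisely the set of edges joining the two base vertices to the rest of the graph. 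Hence no spanning tree can omit two edges of $F$.

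Combining these facts yields the result: since $|\gamma\cap F|\ge 2$ for all $\gamma$ while $\sum_\gamma \mu(\gamma)\,|\gamma\cap F| = 2 = \sum_\gamma \mu(\gamma)\cdot 2$, every optimal $\mu$ must satisfy $\mu(\gamma)\bigl(|\gamma\cap F|-2\bigr)=0$ for each $\gamma$. Thus $\mu$ is supported on trees with $|\gamma\cap F|=2$ and assigns zero mass to every tree containing all of $F$. Finally I would check that there are exactly two spanning trees containing all of $F$, namely the trees obtained by adjoining one of the two roof edges to the base path $F$ (adjoining the shared ceiling edge instead would close the square into a cycle), and that these are precisely $\gamma_2$ and $\gamma_4$ in the enumeration of Figure~\ref{fig:house-spanning-trees}. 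This gives $\supp\mu\subset\Gamma\setminus\{\gamma_2,\gamma_4\}$.

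The main obstacle is conceptual rather than computational: recognizing that the optimality characterization $\mathcal{N}^T\mu=\eta^*$ should be tested against the single well-chosen functional $\mathbbm{1}_F$, and identifying $F$ as the cut-rich three-edge set all of whose $2$-subsets are edge cuts. Once $F$ is fixed, the combinatorial claim $|\gamma\cap F|\ge 2$, the averaging step, and the identification of the two trees that saturate $|\gamma\cap F|=3$ are all routine.
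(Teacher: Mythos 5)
Your proposal is correct and takes essentially the same approach as the paper: both arguments fix the edge-usage vector at $\eta^*\equiv 2/3$ via homogeneity (Theorem~\ref{thm:homogeneous}) and the uniqueness of $\eta^*$, then test this against the indicator of a well-chosen three-edge set and use averaging/complementary slackness to force $\mu(\gamma_2)=\mu(\gamma_4)=0$. The only difference is the choice of set: the paper uses the top three edges $E'$ (roof and ceiling), where $|\gamma\cap E'|\le 2$ because $E'$ is a cycle, while you use the complementary base set $F$, where $|\gamma\cap F|\ge 2$ because every $2$-subset of $F$ is a cut; since every spanning tree has exactly four edges, $|\gamma\cap E'|+|\gamma\cap F|=4$, so the two arguments are the same proof up to complementation.
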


	\begin{proof}
	As observed above, the pmf $\mu=\frac{1}{3}\left(\delta_{\gamma_{1}}+\delta_{\gamma_{9}}+\delta_{\gamma_{10}}\right)$ has the property that $\mathbb{P}_\mu(e\in\underline{\gamma})\equiv 2/3$.  By Theorem~\ref{thm:homogeneous}, then, the house graph is homogeneous and $\mu$ is optimal for the MEO problem.  By the uniqueness of $\eta^*$ in~\eqref{eta*}, it follows that \emph{every} optimal pmf has uniform edge usage probabilities equal to 2/3. Let $\mu$ be any optimal pmf and let $E'\subset E$ be the top three edges of the house (the roof and ceiling).  Observe that
	\begin{equation*}
	|\gamma\cap E'| =
	\begin{cases}
	1 & \text{if }\gamma\in\{\gamma_2,\gamma_4\},\\
	2 & \text{if }\gamma\in\Gamma\setminus\{\gamma_2,\gamma_4\}.
	\end{cases}
	\end{equation*}
	So,
	\begin{equation*}
	\begin{split}
     3\cdot \frac{2}{3} &= 2 =\sum_{e\in E'}\eta^*(e) = \sum_{e\in E'}\sum_{\gamma\in\Gamma}\mathcal{N}(\gamma,e)\mu(\gamma)
	= \sum_{\gamma\in\Gamma}\mu(\gamma)\sum_{e\in E'}\mathcal{N}(\gamma,e)
	= \sum_{\gamma\in\Gamma}\mu(\gamma)|\gamma\cap E'|\\
	&= 2\bigg(\sum_{\gamma\in\Gamma}\mu(\gamma)\bigg) - \mu(\gamma_2) - \mu(\gamma_4)
	= 2 - \mu(\gamma_2) - \mu(\gamma_4).
	\end{split}
	\end{equation*}
	This is only possible if $\mu(\gamma_2)=\mu(\gamma_4)=0$.
	\end{proof}

	\subsection{Other homogeneous graphs}

	A consequence of the connection between modulus and the broadcast game is that the value of the game on a homogeneous graph is $(|V|-1)/|E|$.  This is interesting because homogeneity can sometimes be established without an explicit construction of an optimal pmf.  Take, for example, the complete graph $K_n$.  From the symmetry of the graph, it is straightforward to show that the optimal density $\rho^*$ for the modulus problem is uniform and, therefore, so is $\eta^*$.  While it is possible to construct an optimal pmf for $K_n$ (for example, choose a vertex uniformly at random and take the star of edges incident upon this vertex), we can use homogeneity to establish that the value of the broadcast game on $K_n$ is $2/n$.

	Similarly, consider an arbitrary connected $d$-regular graph.  In~\cite{albin2018fairest}, it was shown that almost every such graph is homogeneous.  Thus, it follows that the value of the broadcast game on almost every $d$-regular graph is $\frac{2(|V|-1)}{d|V|}$.  Unlike in the case of $K_n$, it is not as immediately evident how to construct an optimal pmf for an arbitrary $d$-regular graph.
	
	\subsection{The effect of adding edges}

    As a final example, we consider the effect that adding edges has on the value of the game.  To begin, consider the graph shown in the upper left corner of Figure~\ref{fig:modulus-example}, formed by connecting a copy of $K_5$ to a copy of $K_6$ by a single bridge.
    \begin{figure}
		\begin{center}
			\includegraphics[width=\textwidth]{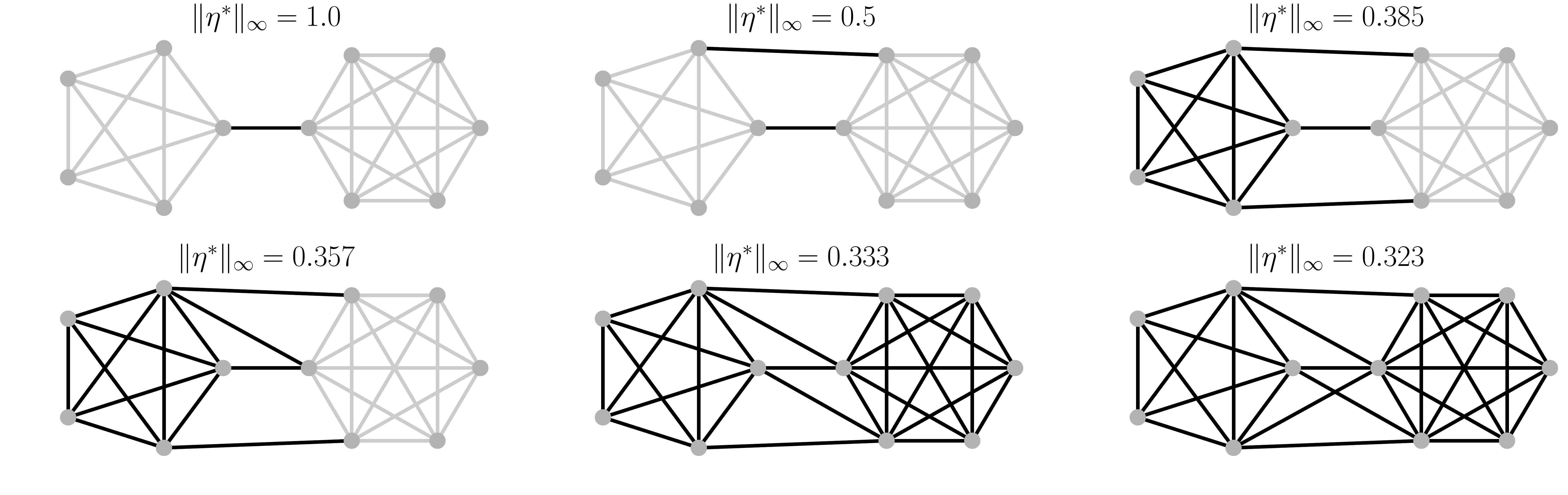}
			\caption{The maximum value of $\eta^*$ (associated with $2$-modulus) for a sequence of graphs.  The darkened edges show where $\eta^*$ attains its maximum.}
			\label{fig:modulus-example}
		\end{center}
	\end{figure}
	This is an example of a $1$-connected graph considered in Theorem~\ref{thm:pure-strategy}.  Since every spanning tree must use the bridge, the value of the game is 1 (\pE always wins).
	
	The top middle portion of the figure shows the effect of adding an additional bridge between the two complete components.  In this case, the broadcaster is forced to use at least one of these two bridges.  The solution to the game involves the broadcaster choosing trees that use \emph{exactly} one of the bridges in such a way that each bridge is used half the time.  The eavesdropper chooses among the two edges uniformly, giving a 50\% chance that the eavesdropper will intercept the broadcast.
	
	One might expect that adding a third bridge (the top right portion of the figure) would result in a game with value $1/3$ (the broadcaster chooses trees that use exactly one of the three bridges and the eavesdropper chooses uniformly among the three edges).  However, this pair of strategies is not optimal.  Indeed, if \pB were to choose this strategy, then every spanning tree selected by \pB would restrict as a spanning tree to the $K_5$ component and, therefore, would use 4 of the 10 edges there.  So \pE could win 40\% of the time by choosing uniformly among the edges of the $K_5$ component.  In the solution shown, the broadcaster is able to choose a random tree that performs better than this.  The eavesdropper is forced to choose uniformly among a larger set of edges (the 13 darker edges) and will win only 38.5\% of the time.  In this case, the minimum feasible partition consists of placing all nodes of the $K_6$ component into a single part and separating each node of $K_5$ into its own part.  This gives a partition $Q$ with $k_Q=6$ and $|E_Q|=13$, so $w(Q)^{-1}=5/13\approx 0.3846$. 	When run on this example, the modulus algorithm from~\cite{albin2016minimal} produces an optimal pmf $\mu^*$  supported on 23 trees. An optimal strategy for $\pB$, therefore, is to choose from among those 23 spanning trees according to the pmf $\mu^*$. Figure~\ref{fig:optimal-support} shows the first 9 spanning trees ordered according to the value of optimal pmf $\mu^*$.  Observe that the spanning trees shown always restrict as spanning trees of the $K_6$ component.  This is a consequence of the fact that the $K_6$ subgraph, in this case, is a homogeneous core as described in~\cite{albin2018fairest}.  On the other hand, the trees in the figure do not, in general, restrict as trees of the $K_5$ subgraph (which would require only one of the three bridge edges to be present).  Indeed, the third row shows spanning trees that use two of the three bridges.  Some of the lower-probability trees (not shown in the figure) even use all three bridges.
	
	\begin{figure}
		\begin{center}
			\includegraphics[width=\textwidth]{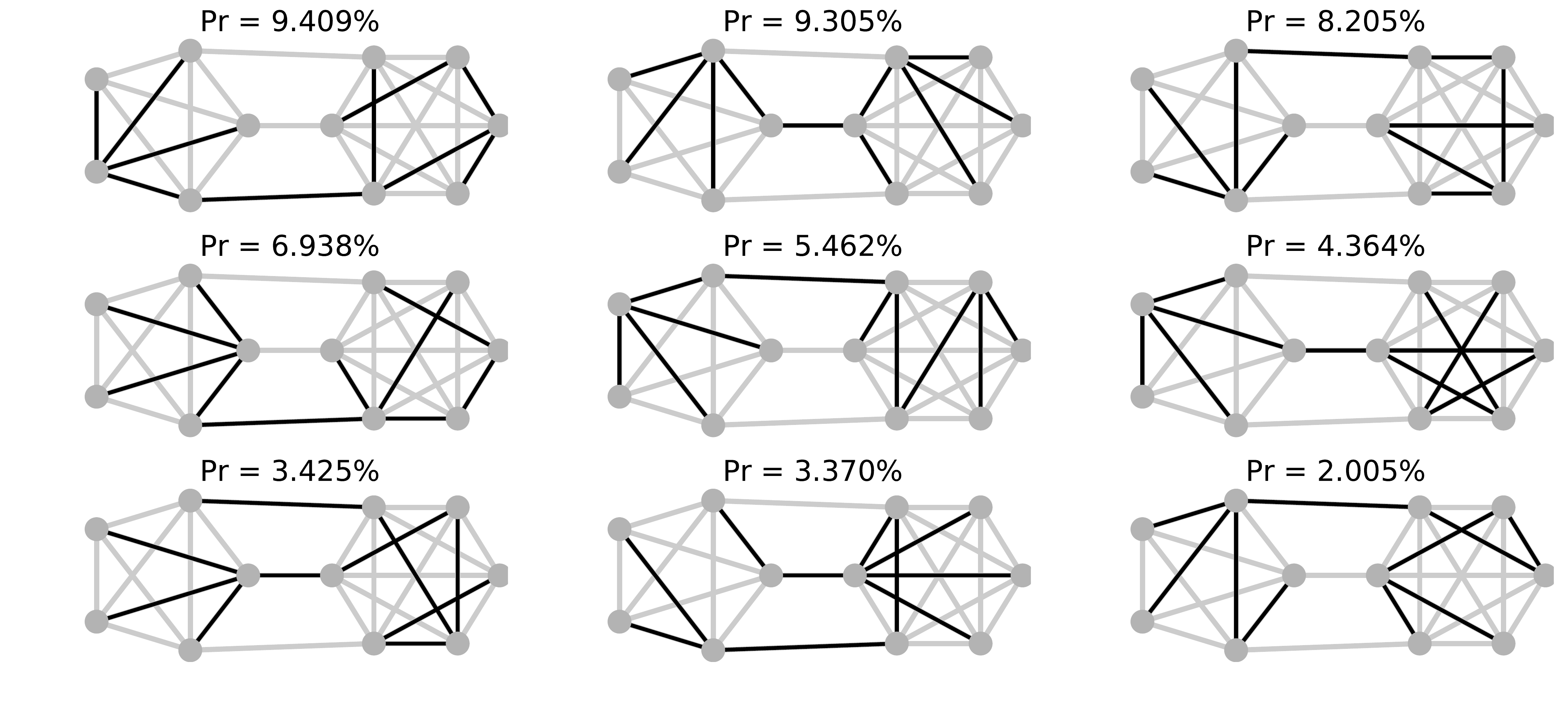}
			\caption{Nine spanning trees with largest value of $\mu^*$.}
			\label{fig:optimal-support}
		\end{center}
	\end{figure}
	
	Adding a fourth bridge has a similar effect (bottom left portion of the figure).  Upon adding a fifth bridge, the graph become homogeneous; the minimum feasible partition is the trivial one, separating each node into its own part.  For this partition $k_Q=11$, $|E_Q|=30$ and $w(Q)^{-1}=1/3$.

	For comparison we have shown $\|\eta_0\|_{\infty}$ from the uniform pmf $\mu_0$ in Figure~\ref{fig:uniform-example}.  Again, we start with the graph in the upper left corner of Figure~\ref{fig:uniform-example}. Since every spanning tree has to use the bridge the expected edge usage is 1 on that edge, and \pE wins by listening there.  When a second edge is added, a broadcaster choosing uniformly among spanning trees will overuse the two bridges, sometimes picking trees that use both.  (Compare this with the optimal strategy for \pB described above.)  By the time a third bridge is added, the difference between the uniform and optimal strategies is striking.  The uniform broadcaster uses each of the three bridges over half the time while the optimal broadcaster, by optimally using the minimum feasible partition, 
	forces the eavesdropper to choose among 13 edges, each used no more than 38.5\% of the time.
	
	A key observation here is that choosing uniformly among spanning trees does not guarantee an even distribution of edge usage.  Consider, for example, the bottom right portion of the figure.  From the previous figure, we know it is possible for \pB to ensure that all edges are equally likely to occur.  However, the uniform distribution uses one edge (the far left one) more than any other edge.   
	
	\begin{figure}
		\begin{center}
			\includegraphics[width=\textwidth]{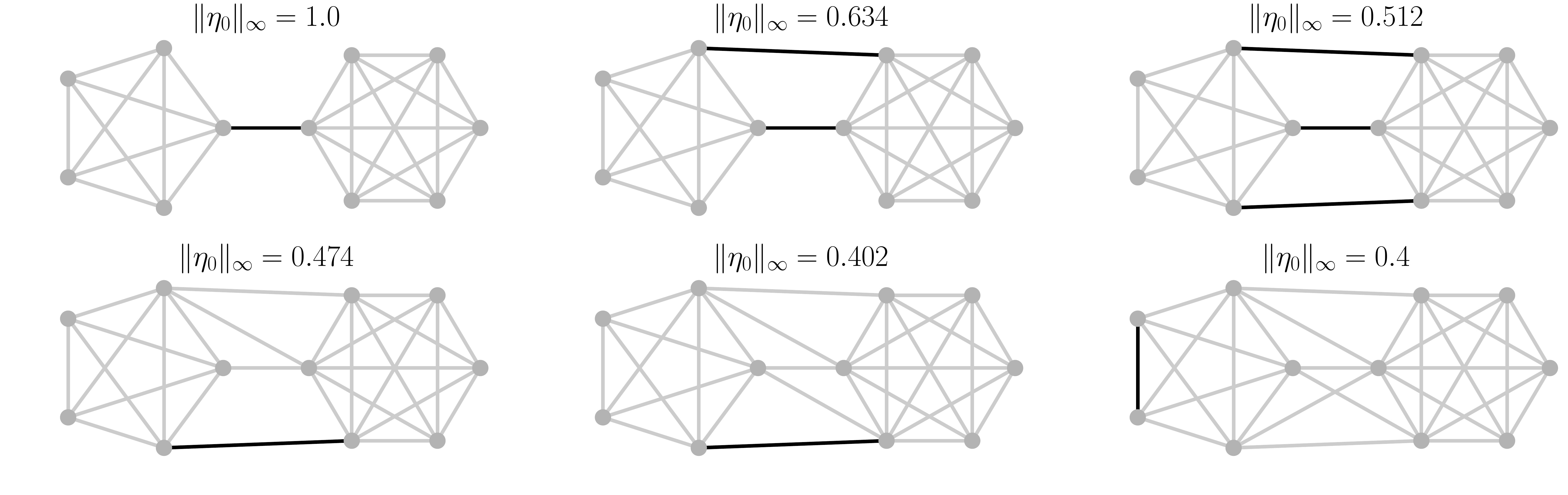}
			\caption{The maximum value of $\eta_0$ (associated with the uniform pmf $\mu_0$) for a sequence of graphs.  The darkened edges show where $\eta_0$ attains its maximum.}
			\label{fig:uniform-example}
		\end{center}
	\end{figure}

    \bibliographystyle{acm}
    \bibliography{refs}

\begin{thebibliography}{10}

\bibitem{ahlfors2010conformal}
{\sc Ahlfors, L.~V.}
\newblock {\em Conformal invariants: topics in geometric function theory},
  vol.~371.
\newblock American Mathematical Soc., 2010.

\bibitem{albin2015modulus}
{\sc Albin, N., Brunner, M., Perez, R., Poggi-Corradini, P., and Wiens, N.}
\newblock Modulus on graphs as a generalization of standard graph theoretic
  quantities.
\newblock {\em Conformal Geometry and Dynamics of the American Mathematical
  Society 19}, 13 (2015), 298--317.

\bibitem{albin2018blocking}
{\sc Albin, N., Clemens, J., Fernando, N., and Poggi-Corradini, P.}
\newblock Blocking duality for p-modulus on networks and applications.
\newblock {\em Annali di Matematica Pura ed Applicata (1923-)\/} (2018), 1--27.

\bibitem{albin2018fairest}
{\sc Albin, N., Clemens, J., Hoare, D., Poggi-Corradini, P., Sit, B., and
  Tymochko, S.}
\newblock Fairest edge usage and minimum expected overlap for random spanning
  trees.
\newblock {\em arXiv preprint arXiv:1805.10112\/} (2018).

\bibitem{albin2016minimal}
{\sc Albin, N., and Poggi-Corradini, P.}
\newblock Minimal subfamilies and the probabilistic interpretation for modulus
  on graphs.
\newblock {\em The Journal of Analysis 24}, 2 (2016), 183--208.

\bibitem{Chopra1989}
{\sc Chopra, S.}
\newblock {On the spanning tree polyhedron}.
\newblock {\em Operations Research Letters 8}, 1 (Feb 1989), 25--29.

\bibitem{cunningham1985optimal}
{\sc Cunningham, W.~H.}
\newblock Optimal attack and reinforcement of a network.
\newblock {\em Journal of the ACM (JACM) 32}, 3 (1985), 549--561.

\bibitem{duffin1962extremal}
{\sc Duffin, R.}
\newblock The extremal length of a network.
\newblock {\em Journal of Mathematical Analysis and Applications 5}, 2 (1962),
  200--215.

\bibitem{gueye2012towards}
{\sc Gueye, A., Marbukh, V., and Walrand, J.~C.}
\newblock Towards a metric for communication network vulnerability to attacks:
  A game theoretic approach.
\newblock In {\em International Conference on Game Theory for Networks\/}
  (2012), Springer, pp.~259--274.

\bibitem{gueye2010design}
{\sc Gueye, A., Walrand, J.~C., and Anantharam, V.}
\newblock Design of network topology in an adversarial environment.
\newblock In {\em International Conference on Decision and Game Theory for
  Security\/} (2010), Springer, pp.~1--20.

\bibitem{gueye2011network}
{\sc Gueye, A., Walrand, J.~C., and Anantharam, V.}
\newblock A network topology design game: How to choose communication links in
  an adversarial environment.
\newblock In {\em Proc. of the 2nd International ICST Conference on Game Theory
  for Networks, GameNets\/} (2011), vol.~11.

\bibitem{haissinsky2009empilements}
{\sc Ha\"Issinsky, P.}
\newblock Empilements de cercles et modules combinatoires.
\newblock {\em Annales de l'Institut Fourier 59}, 6 (2009), 2175--2222.

\bibitem{laszka2013quantifying}
{\sc Laszka, A., and Gueye, A.}
\newblock Quantifying network topology robustness under budget constraints:
  General model and computational complexity.
\newblock In {\em International Conference on Decision and Game Theory for
  Security\/} (2013), Springer, pp.~154--174.

\bibitem{laszka2012game}
{\sc Laszka, A., Szeszl{\'e}r, D., and Butty{\'a}n, L.}
\newblock Game-theoretic robustness of many-to-one networks.
\newblock In {\em International Conference on Game Theory for Networks\/}
  (2012), Springer, pp.~88--98.

\bibitem{Nash1950}
{\sc Nash, J.~F.}
\newblock {Equilibrium points in n-person games}.
\newblock {\em Proceedings of the National Academy of Sciences 36}, 1 (1950),
  48--49.

\bibitem{schramm1993square}
{\sc Schramm, O.}
\newblock Square tilings with prescribed combinatorics.
\newblock {\em Israel Journal of Mathematics 84}, 1-2 (1993), 97--118.

\bibitem{schwartz2011network}
{\sc Schwartz, G.~A., Amin, S., Gueye, A., and Walrand, J.}
\newblock Network design game with both reliability and security failures.
\newblock In {\em Communication, Control, and Computing (Allerton), 2011 49th
  Annual Allerton Conference on\/} (2011), IEEE, pp.~675--681.

\bibitem{szeszler2017security}
{\sc Szeszl{\'e}r, D.}
\newblock Security games on matroids.
\newblock {\em Mathematical Programming 161}, 1-2 (2017), 347--364.

\end{thebibliography}

\end{document}